\numberwithin{equation}{section}
\newtheorem{definition}{Definition}[section]
\newtheorem{theorem}{Theorem}[section]
\newtheorem{proposition}{Proposition}[section]
\newtheorem{lemma}{Lemma}[section]
\newtheorem{corollary}{Corollary}[section]
\theoremstyle{remark}
\newtheorem{remark}{Remark}[section]
\newtheorem*{acknowledgements}{Acknowledgements}
\newtheorem*{MSC}{Mathematics subject clasification (2010)}
\DeclareMathOperator{\diam}{\textrm{diam}}
\DeclareMathOperator{\Rm}{\textrm{Rm}}
\DeclareMathOperator{\ric}{\textrm{Ric}}
\DeclareMathOperator{\hess}{\textrm{Hess}}
\DeclareMathOperator{\rf}{\mathcal{RF}^{C,n}}
\DeclareMathOperator{\rfr}{\mathcal{RF}^{C,n}_{reg}}
\DeclareMathOperator{\rv}{\mathcal V}
\DeclareMathOperator{\vol}{\textrm{Vol}}
\title[The size of the singular set of a Type I Ricci flow]{The size of the singular set of a Type I Ricci flow}
\author[P. Gianniotis]{Panagiotis Gianniotis}
\address{Department of Mathematics\\ 
University College London\\
25 Gordon St, London WC1E 6BT, United Kingdom}
\email{p.gianniotis@ucl.ac.uk}
\begin{document}
\begin{abstract}
In a singular Type I Ricci flow, we consider a stratification of the set where there is curvature blow-up,  according to the number of the Euclidean factors split by the tangent flows. We then show that the strata are characterized roughly in terms of the decay rate of their volume, which in our context plays the role of a dimension estimate.
 \begin{MSC}
 Primary 53C44; Secondary 35K55, 58J35
\end{MSC}
\end{abstract}

\maketitle

\section{Introduction}
A one parameter family $(g(t))_{t\in [-T,0)}$ of Riemannian metrics on a compact manifold $M^n$ is a Ricci flow if it satisfies the evolution equation
\begin{eqnarray}
 \frac{\partial}{\partial t} g(t)=-2\ric(g(t)). \label{rf_eqn}
\end{eqnarray}
  
If  $(g(t))_{t\in [-T,0)}$ can not be extended smoothly past time $t=0$ then the flow is singular and $\sup_{M\times [-T,0)} |\Rm(g(t))|_{g(t)} = \infty$.
%\begin{eqnarray}
% \sup_{M\times [-T,0)} |\Rm(g(t))|_{g(t)} = \infty.
%\end{eqnarray}
We call a singular Ricci flow \textit{Type I} if there is a $C>0$ such that 
\begin{eqnarray}
 \sup_M |\Rm(g(t))|_{g(t)}\leq \frac{C}{|t|},
\end{eqnarray}
for all $t\in [-T,0)$. For Type I flows, Naber \cite{Naber} shows that the Cheeger-Gromov limit $(N, h(t),q)_{t\in (-\infty,0)}$ of any  blow-up sequence of the form $(M,\tau_i^{-1}g(\tau_it), p)$ , where $\tau_i\rightarrow 0$,  is a gradient shrinking Ricci soliton. Namely there exists an $f\in C^\infty (N)$ such that 
\begin{eqnarray}
 \ric(h(-1))+\hess_{h(-1)} f=\frac{h(-1)}{2},
\end{eqnarray}
We will call such a limit \textit{a tangent flow of $g(t)$ at $p$}. 

Enders, M\"uller and Topping \cite{EMT} then show that tangent flows at singular points of the Ricci flow are necessarily non-flat. Here the set of singular points is defined as follows.

\begin{definition}\label{singular_set}
 A point $p\in M$ is at the singular set $\Sigma$ of $(M,g(t))_{t\in [-T,0)}$ if there is no neighbourhood $U$ of $p$ such that
\begin{eqnarray}
 \sup_{U\times [-T,0)} |\Rm(g(t))|_{g(t)} < \infty.
\end{eqnarray}
\end{definition}

Very little is known regarding the structure of the singular set $\Sigma$ or its behaviour as the flow approaches the singular time. In the setting of the K\"ahler Ricci flow on a compact K\"ahler manifold $X$, Collins and Tosatti \cite{Collins} prove  a conjecture of Feldman, Ilmanen and Knopf \cite{FIK}, that if $\Sigma$ is a proper subset of $X$ then it is the union of irreducible analytic subvarieties of $X$ whose volume in the respective dimension decays to zero. Moreover, $\Sigma$ is a subset of real codimension at least two.

For a general Type I Ricci flow, such precise understanding of the singular set is not yet available. It is shown however in \cite{EMT} that the $n$-dimensional volume of the singular set should decay to zero as the flow approaches the singular time. Namely, if $\vol_{g(0)} \Sigma <\infty$ then $\lim_{t\rightarrow T}\vol_{g(t)} \Sigma=0$. 

This decay motivates our work in two different ways. First of all, it raises the question of the rate of this decay. Note that, in principle, estimates on the volume of high curvature or singular regions could lead to $L^p$ curvature estimates along a Type I Ricci flow. From a different point of view, the decay of the volume of the singular set may be seen as an estimate for the ``dimension" of $\Sigma$ at the singular time.

Dimension estimates for singular sets of geometric PDEs have a long history. For instance, we have dimension estimates for the singular set of mass minimizing integral currents (see Federer \cite{Federer} and Almgren \cite{Almgren}), as well as for energy minimizing maps (see Schoen-Uhlenbeck \cite{SU} and Simon \cite{SimonL}). Moreover, White in \cite{White} proves very general stratification theorems for upper-semicontinuous functions on domains in Euclidean space, which put the previous results in a general framework and make the theory applicable in a variety of contexts, including the mean curvature flow.  Last but not least, one has the dimension estimates for the singular set of non-collapsed limits of Riemannian manifolds with Ricci curvature bounded below, arising from the theory of Cheeger and Colding in \cite{CheegerColding}. 

In general such stratification theorems involve decomposing the singular set $\Sigma$ into an ascending sequence $\Sigma_0 \subseteq\Sigma_1 \subseteq ... \subseteq \Sigma_N=\Sigma$ (for some $N\geq 0$) and then proving Hausdorff dimension estimates of the form $\dim \Sigma_j\leq j$. In this article, inspired particularly from \cite{White}, we prove a stratification theorem for the singular set of a Type I Ricci flow. 

Let $\Sigma$ be the singular set of a Type I Ricci flow $(M,g(t))_{t\in [-T,0)}$, as in Definition \ref{singular_set}, and for every $j=0,\ldots,n-2$ set
\begin{eqnarray}
 \Sigma_j&=&\{ x\in \Sigma, \textrm{ no tangent flow at } x \textrm{ splits as}\; (N^{n-j-1},h(t))\times (\mathbb R^{j+1},g_{Eucl})  \}. \nonumber
\end{eqnarray}
It is clear that $\Sigma_0\subseteq  \Sigma_1\subseteq \cdots\subseteq \Sigma_{n-2}\subseteq\Sigma$. 

The main result provides analogues for Type I Ricci flows of the Hausdorff dimension estimates mentioned above. However, several subtleties arise when we attempt to make sense of such estimates in the case of the Ricci flow, since the aid of an ambient space is no longer available. Namely, although Ricci flow shrinks a round $n$-sphere to a point and we would like to regard such singularity as $0$-dimensional, the singular set $\Sigma$ is the whole manifold.
 
One way to remedy this issue would be to study instead the limiting space of the flow as it approaches the singular time. However, it is not known whether a singular Ricci flow $(M,g(t))_{t\in[0,T)}$ (even a Type I flow) converges to a metric space $(X,d_X)$ as $t$ approaches the singular time $T$ in general. Alternatively, one could make sense of the concept of the singular set and its dimension by embedding the flow in a larger ambient space, see \cite{KL1} and \cite{HN}. However, this is beyond the scope of the present paper.  

In the following, we interpret the dimensionality of a singular stratum $\Sigma_j$ via volume decay estimates, observing that along a cylindrical Ricci flow $g(t)=-2(n-j-1)t g_{S^{n-j}} + g_{Eucl}$ on $S^{n-j}\times \mathbb R^j$ the volume form is given by $d\mu_{g(-\tau)}=\tau^{\frac{n-j}{2}} d\mu_{g(-1)}$.

\begin{theorem}\label{main_theorem}
Fix $j=0,\ldots, n-2$ and let $\varepsilon >0$. Then, there exist closed $A_i\subset \bar{\Sigma}_j$ ($i=1,2,\ldots$), depending on $j$ and $\varepsilon$, such that $\Sigma_j \subset \bigcup_{i=1}^\infty A_i$ and  
\begin{eqnarray}
 \frac{\vol_{g(-\tau)} (A_i) }{\tau^{\frac{n-j-\varepsilon}{2}}} &\leq& C(j,\varepsilon,i)\tau^{\beta},\label{size_est_1}
\end{eqnarray}
for some $\beta=\beta(\varepsilon)\in (0,1)$. Also, for every $\delta>0$ there is $i_0$ such that 
\begin{eqnarray}
 \vol_{g(-\tau)}(\bar\Sigma_j\setminus A_i) <\delta, \label{small_vol}
\end{eqnarray}
for every $i\geq i_0$ and $\tau\in (0,T]$.

Moreover, for each $x \in \Sigma_0$ there exist $R_0,\bar\tau >0$  such that
\begin{eqnarray}
B_g(x,-\bar\tau, R_0\sqrt{\bar\tau}) \cap \{y\in M,\;\Theta_g(y)\leq \Theta_g(x)\} \subseteq B_g(x,- \tau, R_0\sqrt\tau), \label{size_est_3}
\end{eqnarray}
for every $\tau \in (0,\bar\tau]$. 
\end{theorem}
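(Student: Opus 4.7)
The plan is to adapt the quantitative stratification philosophy (as developed in \cite{White}) to the parabolic Ricci flow setting, using Perelman's reduced volume based at the singular time as the upper semi-continuous density $\Theta_g$ that controls the tangent flow behaviour.

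First I would introduce a quantitative notion of symmetry: for $\eta>0$ and $\tau>0$, say that $x$ is \emph{$(\eta,j+1)$-symmetric at scale $\sqrt\tau$} if, after parabolic rescaling $\tau^{-1}g(\tau t)$ around $x$, the resulting flow is $\eta$-close in the local smooth Cheeger--Gromov sense to a gradient shrinking soliton splitting off an $\mathbb R^{j+1}$ factor. A soliton/cone-splitting lemma (two independent almost-symmetries at comparable scales combine to force a genuine Euclidean splitting) together with the monotonicity and upper semi-continuity of $\Theta_g$ yields the key dichotomy: either $\Theta_g$ drops by a definite amount $\delta(\eta)$ across consecutive dyadic scales, or $x$ is $(\eta,j+1)$-symmetric at that scale. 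Since the total $\Theta_g$-drop is finite (the Type I hypothesis gives a uniform lower bound on $\Theta_g$ via non-collapsing, and $\Theta_g\leq 1$ always), the number of ``bad'' scales where symmetry fails is bounded by some $N(\eta)$. Hence $\Sigma_j=\bigcup_k \Sigma_j^{\eta_k}$ with $\eta_k\to 0$, where $\Sigma_j^\eta$ denotes the points that fail to be $(\eta,j+1)$-symmetric at \emph{every} scale $\sqrt\tau\leq r_0$.

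Second, I would prove a Minkowski-type volume estimate for each $\Sigma_j^\eta$ by Vitali covering and induction on scale. The bounded bad-scale budget forces $\Sigma_j^\eta$ to be covered, at each dyadic scale $\sqrt\tau=2^{-k}$, by at most $C(\eta,\varepsilon')\tau^{-(j+\varepsilon')/2}$ parabolic balls of that radius. Combined with the Type I Bishop--Gromov bound $\vol_{g(-\tau)}B_{g(-\tau)}(x,\sqrt\tau)\leq C\tau^{n/2}$, this yields $\vol_{g(-\tau)}(\Sigma_j^\eta)\leq C\tau^{(n-j-\varepsilon')/2}$. Picking $\varepsilon'<\varepsilon$ and setting $\beta:=(\varepsilon-\varepsilon')/2\in(0,1)$, and then taking $A_i$ to be appropriate closed subsets of $\bar\Sigma_j$ exhausting $\Sigma_j^{\eta_i}$, produces (\ref{size_est_1}). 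The uniform smallness (\ref{small_vol}) then follows because $\bigcup_i A_i\supseteq\Sigma_j$ while the residual $\bar\Sigma_j\setminus A_i$ sits in strata of lower dimension whose $\vol_{g(-\tau)}$-decay is uniform in $\tau$, hence can be absorbed using the same covering estimates.

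Third, for the local statement (\ref{size_est_3}) at $x\in\Sigma_0$, the idea is to exploit that no tangent flow at $x$ splits any Euclidean factor, so the locus of maximal density in every tangent soliton reduces to the basepoint. Choose $\bar\tau$ small enough that the parabolic rescaling $\bar\tau^{-1}g(\bar\tau t)$ on the parabolic ball of normalized radius $R_0$ around $x$ is close, via Naber's compactness, to a tangent soliton centred at $x$. Upper semi-continuity of $\Theta_g$ forces any $y$ in this ball with $\Theta_g(y)\leq\Theta_g(x)$ to correspond, in the limit, to a point of density at least $\Theta_g(x)$, hence of equal maximal density, in the soliton; by the $\Sigma_0$ hypothesis and soliton rigidity such a point must be the basepoint. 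In the soliton picture these points are driven toward the centre at the parabolic rate $\sqrt{\tau/\bar\tau}$, and transferring this behaviour back to $g(t)$ via a distance-distortion estimate on the parabolic ball gives (\ref{size_est_3}). The main obstacle is precisely this last step: one must pass from infinitesimal information (tangent flows) to macroscopic geometric control on a parabolic ball of fixed normalised radius $R_0$, uniformly for all $\tau\in(0,\bar\tau]$. This is where the $\Sigma_0$ assumption is indispensable, since otherwise nearby points of equal density could drift along the Euclidean factors of the soliton and escape the shrinking ball. By contrast, the first two parts are essentially routine translations of well-established stratification technology into the parabolic Ricci flow language.
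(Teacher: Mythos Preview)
Your plan follows the Cheeger--Naber quantitative-stratification template, whereas the paper takes the older Simon/White route: it slices $\Sigma_j$ first by the threshold scale $\delta(x,j,\epsilon,\alpha)$ below which a ``Line-up Lemma'' (Lemma~\ref{approx}) holds---points of nearly minimal density are forced into a thin neighbourhood of the spine of a nearby soliton---and then further by the value of $\Theta_g$ into finitely many level sets $S^{i,m}$. An explicit Covering Lemma (Lemma~\ref{covering_lemma}) then lets one iteratively refine a covering of each $S^{i,m}$ scale-by-scale with a geometric gain $P(\sigma)\sigma^s\le\tfrac12$. Both strategies are viable in principle; yours would in fact yield the Minkowski-content bound the paper explicitly postpones to future work, but it requires a quantitative cone-splitting lemma for Type~I Ricci flows that you have not stated and that is not available off the shelf.

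Two points in your sketch are genuinely wrong in this setting and would derail the argument if left uncorrected. First, the density $\Theta_g$ built from the singular reduced volume is \emph{lower} semicontinuous under Cheeger--Gromov convergence (Proposition~\ref{rv_props}), not upper semicontinuous; correspondingly the spine of a shrinking soliton is where $\Theta_g$ attains its \emph{minimum} (Lemma~\ref{density_soliton}), not its maximum. Your argument for \eqref{size_est_3} has the inequalities reversed: what one actually gets is $\Theta_z(y_\infty)\le\liminf_i\Theta_g(y_i)\le\Theta_g(x)$, and since $\Theta_g(x)=\Theta_z(m)$ equals the minimum on the soliton by Theorem~\ref{ARV_uniqueness}, this forces $y_\infty$ into the spine. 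Second, the inequality $\vol_{g(-\tau)}B_{g(-\tau)}(x,\sqrt\tau)\le C\tau^{n/2}$ is not a Bishop--Gromov bound (Bishop--Gromov goes the other way under a lower Ricci bound) but Zhang's non-inflating theorem (Theorem~\ref{noninflatingthm}); this is a nontrivial input specific to Ricci flow, and without it your ball-count cannot be converted into a volume estimate.
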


Here, $B_g(x,t,r)$ denotes the $g(t)$-metric ball of radius $r$ centered at $x\in M$, $\vol_{g(t)}$ is the $n$-dimensional Hausdorff measure with respect to $g(t)$ and $\Theta_g(\;\cdot\; )$ is a lower semicontinuous function on $M$, analogous to the Gaussian density for the mean curvature flow, which is defined in Section \ref{def_of_density}. 

Observe that estimate (\ref{size_est_3}) may be interpreted as the isolatedness of points in the $0$-dimensional stratum $\Sigma_0$ with a fixed density value. Of course such statement taken literally can not be true. For instance, for the shrinking round sphere $S^n$, $\Sigma_0=S^n$ and all points have the same density due to symmetry. On the other hand, the diameter goes to zero and all points may be thought to represent a single singular point.

Idealy, in Theorem \ref{main_theorem} we would prefer an estimate on the volume of $\Sigma_j$ instead of the sets $A_i$.  These sets arise by the decomposition of $\Sigma_j$ according to the scale below which the flow is sufficiently close to a shrinking soliton. It is below that scale that our argument allows to iteratively refine a given covering, making it more efficient as the flow approaches the singular time. On the other hand, an estimate on the volume on the whole $\Sigma_j$ would be more in the spirit of Minkowski content estimates. Such estimates for singular sets have recently been obtained using quantitative differentiation  arguments in different contexts (see for instance \cite{ChNab1}, \cite{ChNab2}, \cite{ChHasNab1}, \cite{ChHasNab2}, \cite{ChNabVAl}). We intend to explore this direction further in a future paper.

Finally, the following corollary partially improves the volume decay statement in \cite{EMT} exploiting the fact that a shrinking Ricci soliton splitting more than $n-2$ Euclidean factors should necessarily be flat. Moreover, when the Weyl tensor remains bounded along the flow we obtain an improved volume decay estimate, as a consequence of the fact that Weyl-flat gradient shrinking Ricci solitons can split at most one Euclidean factor, which follows from  \cite{Weylflat}.  

\begin{corollary}\label{corol}
If $\Sigma=\Sigma_j$ it follows that for every $\varepsilon>0$ there exist closed $A_i\subseteq \Sigma$, $i=1,2,...$, such that $\Sigma=\bigcup_{i=1}^\infty A_i$ and
\begin{eqnarray}
\vol_{g(-\tau)}(A_i) \leq C(i,\varepsilon) \tau^{\frac{n-j}{2}-\varepsilon}, \label{cor_est}
\end{eqnarray}
for every $\tau\in (0,T]$. In particular we distinguish the following cases. 
\begin{enumerate}
\item In general $\Sigma= \Sigma_{n-2}$ and $\vol_{g(-\tau)}(A_i) \leq C(i,\varepsilon) \tau^{1-\varepsilon}$.
\item Suppose that the Weyl curvature satisfies 
\begin{eqnarray}
\sup_{M\times [-T,0)} |W_g|_g < \infty. \nonumber
\end{eqnarray}
Then $\Sigma=\Sigma_1$ and $\vol_{g(-\tau)}(A_i) \leq C(i,\varepsilon) \tau^{\frac{n-1}{2}-\varepsilon}$.
\end{enumerate}
In both cases, for every $\delta>0$ there is $i_0$ such that $\vol_{g(-\tau)}(\Sigma\setminus A_i) <\delta$ for every $i\geq i_0$ and $\tau\in (0,T]$.
\end{corollary}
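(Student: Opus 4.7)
The plan is to derive the corollary directly from Theorem \ref{main_theorem} after identifying, in each of the two scenarios, the smallest index $j$ for which the assumption $\Sigma = \Sigma_j$ is automatic.

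First I observe that, since the singular set $\Sigma$ is closed, $\bar\Sigma_j \subseteq \Sigma$, so under the hypothesis $\Sigma = \Sigma_j$ we have $\bar\Sigma_j = \Sigma_j = \Sigma$ and the $A_i$ provided by Theorem \ref{main_theorem} are closed subsets of $\Sigma$ whose union contains $\Sigma_j = \Sigma$. To obtain the exponent $\frac{n-j}{2} - \varepsilon$ from (\ref{size_est_1}), I apply the main theorem with parameter $\varepsilon_1 = 2\varepsilon$: this yields
\begin{equation*}
\vol_{g(-\tau)}(A_i) \leq C(j,\varepsilon,i)\, \tau^{\frac{n-j-2\varepsilon}{2} + \beta(2\varepsilon)} \leq C(j,\varepsilon,i)\, \tau^{\frac{n-j}{2}-\varepsilon},
\end{equation*}
since $\beta(2\varepsilon) > 0$. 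The smallness estimate $\vol_{g(-\tau)}(\Sigma\setminus A_i) < \delta$ for $i\ge i_0$ is then a direct transcription of (\ref{small_vol}) using $\bar\Sigma_j = \Sigma$.

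For case (1), I need to show $\Sigma = \Sigma_{n-2}$. Suppose $x\in \Sigma \setminus \Sigma_{n-2}$; then some tangent flow at $x$ splits as $(N^1, h(t)) \times (\mathbb R^{n-1}, g_{Eucl})$. A one-dimensional Riemannian manifold is automatically flat, so $N^1 \times \mathbb R^{n-1}$ is flat. This contradicts the theorem of Enders, M\"uller and Topping \cite{EMT} stating that tangent flows at singular points are non-flat. Hence $\Sigma = \Sigma_{n-2}$ and the first estimate follows by setting $j = n-2$.

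For case (2), I must show $\Sigma = \Sigma_1$ under the assumption $\sup_{M\times[-T,0)} |W_g|_g < \infty$. The key computation is that the Weyl tensor scales like the Riemann tensor under parabolic rescaling: if $g_i(t) = \tau_i^{-1} g(\tau_i t)$ with $\tau_i \to 0$, then $|W_{g_i(t)}|_{g_i(t)} = \tau_i |W_{g(\tau_i t)}|_{g(\tau_i t)} \leq C\tau_i \to 0$. Therefore every tangent flow at a singular point is a gradient shrinking Ricci soliton with vanishing Weyl tensor. By the classification result of \cite{Weylflat}, such a Weyl-flat shrinker can split at most one Euclidean factor; in particular no tangent flow splits $(N^{n-2}, h(t))\times (\mathbb R^2, g_{Eucl})$, so $x\in \Sigma_1$. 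Applying the volume estimate above with $j = 1$ gives the $\tau^{\frac{n-1}{2}-\varepsilon}$ bound.

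The corollary is thus essentially a bookkeeping consequence of Theorem \ref{main_theorem} once one identifies the correct stratum. The only non-trivial inputs are the non-flatness of tangent flows from \cite{EMT}, the parabolic scaling of the Weyl tensor, and the splitting constraint for Weyl-flat shrinkers from \cite{Weylflat}; I do not foresee any significant obstacle beyond invoking these facts correctly.
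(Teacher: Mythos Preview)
Your proposal is correct and follows essentially the same approach as the paper: reduce to Theorem~\ref{main_theorem} with parameter $2\varepsilon$ in place of $\varepsilon$, use that $\bar\Sigma_j=\Sigma$ because $\Sigma$ is closed, and identify $j=n-2$ (respectively $j=1$) via the flatness of a soliton splitting $\mathbb R^{n-1}$ (respectively the classification of Weyl-flat shrinkers from \cite{Weylflat}). The only cosmetic point is that your inequality $\tau^{\frac{n-j}{2}-\varepsilon+\beta}\le\tau^{\frac{n-j}{2}-\varepsilon}$ tacitly assumes $\tau\le 1$; for $\tau\in(1,T]$ simply absorb the bounded factor $\tau^{\beta}\le T^{\beta}$ into $C(i,\varepsilon)$.
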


The outline of the paper is as follows. In Section \ref{preliminaries} we collect a few preliminary facts. Then, in Section \ref{def_of_density} we introduce a monotone quantity which plays the role of Perelman's reduced volume based at the singular time, and its associated density function. They are both lower-semicontinuous under the Cheeger-Gromov convergence of Ricci flows, which is essential to our arguments. In Section \ref{spl}, given any non-flat gradient shrinking Ricci soliton, we distinguish the set of points where the density function above achieves its minimum, called the spine. We then prove a splitting theorem (Theorem \ref{splitting}), which asserts that the soliton splits enough Euclidean factors $\mathbb R^j$, $0\leq j\leq n-2$, so that its spine is of the form $V\times \mathbb R^j$ and the diameter of $V$ decays to zero as the flow induced by the soliton approaches the singular time. Finally, in Section \ref{sizeof} we prove Theorem \ref{main_theorem} via a covering argument similar to \cite{SimonL}.

\begin{acknowledgements}
The author would like to thank Alix Deruelle, Felix Schulze and Peter Topping for many interesting discussions and their valuable support. This research has been supported by the grant of the German Science Foundation entitled  ``Regularity and stability of curvature flows and their applications to geometric variational problems". The author would also like to acknowledge support by  the EPSRC, on a programme grant entitled ``Singularities of Geometric Partial Differential Equations'' reference number EP/K00865X/1, during the first stages of the project.
\end{acknowledgements}

\section{Preliminaries}\label{preliminaries}
In this section we collect some preliminary results on which we will rely in the rest of the paper.

\subsection{Gradient shrinking Ricci solitons}
A triplet $(M^n,g,f)$ is called a gradient shrinking Ricci soliton if it satisfies the equation
\begin{eqnarray}
\ric_g+\hess_g f = \frac{g}{2}.\label{soliton}
\end{eqnarray}

Clearly, if $(M,g)$ is complete with bounded curvature, the vector field $\nabla^g f$ is complete. It then follows from (\ref{soliton}) that the shrinking Ricci soliton $(M,g,f)$  induces a Ricci flow $h(t)=-t \phi_t^* g$ on $M$, where $\phi_t$ are the diffeomorphisms generated by $\nabla^g f$ via
\begin{eqnarray}
\frac{d}{dt}\phi_t&=&-\frac{1}{t} \nabla^g f \circ \phi_t , \nonumber\\
\phi_{-1}&=&id_M.\nonumber
\end{eqnarray}

It is well known that the following identity holds
\begin{eqnarray}
R+|\nabla f|^2-f&=& c. \label{soliton_identity}
\end{eqnarray}
The soliton function $f$ is well-defined up to a linear function. However, when $c=0$ in (\ref{soliton_identity}),  we call $f$  a \textit{normalized soliton function}. Normalized soliton functions will be important to us mainly because of the following result from \cite{Naber}.
\begin{lemma}[Lemma 2.1 in \cite{Naber}]\label{f_vol}
Let $(M,g,f), (M',g',f')$ be normalized shrinking solitons and suppose that
\begin{enumerate}
\item $\int_M e^{-f} d\mu_g, \int_{M'} e^{-f'} d\mu_{g'} < +\infty$,
\item $(M,g)$ and $(M',g')$ are isometric.
\end{enumerate}
Then, $\int_M e^{-f} d\mu_g= \int_{M'} e^{-f'} d\mu_{g'}$.
\end{lemma}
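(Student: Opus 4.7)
The plan is to reduce the statement, via the given isometry, to a uniqueness question for normalized potentials on a single soliton, and then to show that any two such potentials differ only by a translation along a Euclidean factor, leaving the weighted volume unchanged. Concretely, I would first pull $f'$ back by an isometry $\Phi:(M,g)\to(M',g')$; since both the soliton equation (\ref{soliton}) and the identity (\ref{soliton_identity}) (with $c=0$) are preserved by isometries, $\tilde f:=\Phi^*f'$ is a normalized potential on $(M,g)$, and the change-of-variables formula gives $\int_{M'}e^{-f'}d\mu_{g'}=\int_M e^{-\tilde f}d\mu_g$. It therefore suffices to show that any two normalized potentials $f_1,f_2$ on $(M,g)$ with finite weighted volume produce the same integral.

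Set $h:=f_1-f_2$. Subtracting (\ref{soliton}) for the two potentials gives $\hess h=0$, so $X:=\nabla h$ is a parallel vector field. Subtracting the normalizations yields $h=|\nabla f_1|^2-|\nabla f_2|^2$; expanding $|\nabla f_1|^2=|\nabla f_2+X|^2$ gives $h=2\langle\nabla f_2,X\rangle+|X|^2$. Taking the gradient and using that $X$ is parallel (so $|X|$ is constant and $\nabla_Y X=0$), I obtain $\hess f_2\cdot X=\tfrac12 X$, and combining this with (\ref{soliton}) forces $\ric(X,\cdot)=0$. Moreover, a putative closed orbit of $X$ of period $T$ would, upon integrating $\langle\nabla f_2,X\rangle=(h-|X|^2)/2$ around it, force $|X|^2 T=0$; hence $X$ either vanishes or has integral curves diffeomorphic to $\mathbb R$. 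Iterating on a maximal set of parallel vector fields and invoking a de Rham–type argument, one writes $(M,g)\cong(N,g_N)\times(\mathbb R^k,g_{Eucl})$ globally, where $N$ admits no nonzero parallel vector field.

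Any normalized potential $f$ respects this splitting: on the $\mathbb R^k$ factor the soliton equation reduces to $\hess f=\tfrac12 g_{Eucl}$, so $f(x,y)=f_N(x)+\tfrac14|y-y_0|^2$ for some $y_0\in\mathbb R^k$, and substituting into (\ref{soliton_identity}) shows that $f_N$ is normalized on $N$. Since $N$ carries no nonzero parallel vector field, the previous paragraph applied to $N$ shows that such a normalized $f_N$ is unique; the only freedom in the choice of normalized $f$ on $M$ is the translation parameter $y_0$. Hence
\begin{equation*}
\int_M e^{-f}\,d\mu_g=\Bigl(\int_N e^{-f_N}d\mu_{g_N}\Bigr)\Bigl(\int_{\mathbb R^k}e^{-|y-y_0|^2/4}dy\Bigr)=(4\pi)^{k/2}\int_N e^{-f_N}d\mu_{g_N},
\end{equation*}
which is independent of $y_0$, giving the result. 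The main obstacle is the global splitting step: passing from parallel Ricci-flat vector fields to a genuine Riemannian product decomposition requires ruling out compact flat quotients such as torus factors, but this is achieved by the no-closed-orbits observation together with the standard completeness of the soliton vector field $\nabla f$ in this setting.
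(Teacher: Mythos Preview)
The paper does not prove this lemma; it is quoted from \cite{Naber} (as Lemma~2.1 there) and only stated here, so there is no in-paper argument to compare against.

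Your approach is essentially the standard one and is correct in outline: pull back via the isometry to get two normalized potentials $f_1,f_2$ on a single $(M,g)$, observe that $h=f_1-f_2$ has $\hess_g h=0$, and use the parallel gradient $X=\nabla h$ to split off a line along which the potentials differ only by a translation. Two points deserve tightening. First, your closed-orbit argument via $\langle\nabla f_2,X\rangle=(h-|X|^2)/2$ does not by itself force $|X|^2T=0$; integrating gives $\int_0^T h(\gamma(t))\,dt=|X|^2T$, which is not a contradiction. The direct route is simply that along an integral curve of $X$ one has $dh/dt=|X|^2$, so a closed orbit of period $T>0$ would give $0=h(\gamma(T))-h(\gamma(0))=|X|^2T$. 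Second, you do not actually need a full de~Rham decomposition or the assertion that the residual factor $N$ admits no nonzero parallel vector field (which your argument does not establish for \emph{arbitrary} parallel fields, only for $\nabla h$). One split $M\cong N\times\mathbb R$ along $X$ already suffices: since $\ric(X,\cdot)=0$ and the metric splits, the soliton equation forces the mixed Hessian of each $f_i$ to vanish, so $f_i(x,y)=f_i^N(x)+\tfrac14(y-y_i)^2$ with $f_i^N$ normalized on $N$; and because $h=f_1-f_2$ depends only on the $\mathbb R$-coordinate, $f_1^N-f_2^N$ is constant, hence zero by the normalization. The weighted volume is then $(4\pi)^{1/2}\int_N e^{-f^N}d\mu_{g_N}$, independent of the translation $y_i$, and no iteration is required.
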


\subsection{The reduced distance and volume under Type I curvature bounds}
\begin{definition}[Type I Ricci flow]
For every positive integer $n$ and $C>0$ we define the following classes of pointed complete Ricci flows
\begin{eqnarray}
 \rf&=&\{ (M^n,g(t),p)_{t\in (-T,0)},\; g(t) \textrm{\;solves\;} (\ref{rf_eqn})\;\textrm{and}\;|\Rm_g|_g\leq \frac{C}{|t|}\; \textrm{on}\; M\times (-T,0) \}. \nonumber\\
 \rfr&=&\{(M^n,g(t),p)\in \rf,\; \sup_{ M\times (-T,0)}|\Rm_g|_g<+\infty \}. \nonumber
\end{eqnarray}
 \end{definition}
 
 We equip  $\rf$ with the topology of smooth ($C^\infty$) pointed Cheeger-Gromov-Hamilton convergence for Ricci flows (uniformly in compact sets). Since for any $\mathfrak g=(M,g(t),p)_{t\in (-T,0)}$ and $T_i>0$ such that $T_i\searrow 0$,  the sequence $\mathfrak g_i=(M,g(T_i +t),p)_{t\in(-T+T_i,0)}$ converges to $\mathfrak g$, it follows that $\overline{\rfr}=\mathcal{RF}^C$.
 
 \begin{definition}\label{reduced_distance}
 For $\mathfrak g=(M,g(t),p)_{t\in [-T,0]}\in \rfr$, the reduced distance function, originaly defined in \cite{Perelman1}, is given by
 \begin{eqnarray}
 l_{\mathfrak g}(x,\bar\tau)=\inf_\gamma\left\{ \frac{1}{2\sqrt{\bar\tau}} \int_0^{\bar\tau}\sqrt \tau (R_{g(-\tau)} (\gamma(\tau) )+\left|\frac{d\gamma}{d\tau}\right|_{g(-\tau)}^2  ) d\tau   \right\}, \label{r_dist} 
 \end{eqnarray}
 where $\gamma: [0,\bar\tau]\rightarrow M$, $\gamma(0)=p$, $\gamma(\bar\tau)=x$.
 \end{definition}
 The following estimate from \cite{Naber} will be crucial to  our work.
 
\begin{proposition}\label{red_dist_est}
 Let $\mathfrak g=(M,g(t),p)\in \rfr$. There exists $A=A(n,C)>0$ such that
\begin{enumerate}
 \item $\frac{1}{A}(1+\frac{d_{g(-\tau)}(p,x)}{\sqrt{\tau}})^2 - A \leq l_{\mathfrak g}(x,\tau) \leq A (1+\frac{d_{g(-\tau)}(p,x)}{\sqrt\tau})^2$,
 \item $|\nabla l_{\mathfrak g}|(x,\tau)\leq \frac{A}{\sqrt{\tau}} (1+\frac{d_{g(-\tau)}(p,x)}{\sqrt{\tau}})$,
 \item $|\frac{\partial l_{\mathfrak g}}{\partial \tau}|(x,\tau)\leq \frac{A}{\tau} (1+\frac{d_{g(-\tau)}(p,x)}{\sqrt{\tau}})^2$.
\end{enumerate}
\end{proposition}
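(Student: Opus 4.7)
The plan is to prove (1) by direct estimation on the variational formula (\ref{r_dist}) and then derive (2) and (3) from Perelman's first-variation identities combined with (1) and the Type I curvature bound. For the upper bound in (1), I would insert an explicit competitor into (\ref{r_dist}): set $d:=d_{g(-\bar\tau)}(p,x)$ and let $\sigma:[0,1]\to M$ be a $g(-\bar\tau)$-minimizing geodesic from $p$ to $x$, then take $\gamma$ to be stationary at $p$ on $[0,\bar\tau/2]$ and an affine reparametrization of $\sigma$ on $[\bar\tau/2,\bar\tau]$. Type I gives $R_{g(-\tau)}\leq nC/\tau$, so after the $1/(2\sqrt{\bar\tau})$ prefactor the scalar-curvature contribution to $l$ is at most a universal constant. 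The Ricci flow equation and $|\ric|\leq (n-1)C/\tau$ yield the H\"older-type distortion $|v|^2_{g(-\tau)}\leq (\bar\tau/\tau)^{2(n-1)C}|v|^2_{g(-\bar\tau)}$ for $\tau\leq\bar\tau$, which on $[\bar\tau/2,\bar\tau]$ is a bounded factor, so $|\gamma'|^2_{g(-\tau)}\leq A_0 d^2/\bar\tau^2$ there; evaluating (\ref{r_dist}) gives $l_{\mathfrak g}(x,\bar\tau)\leq A(1+d^2/\bar\tau)$, the desired upper bound.

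For the lower bound, the technical obstacle is the same distortion factor $(\bar\tau/\tau)^{(n-1)C}$, which now blows up as $\tau\to 0$ and makes a naive single Cauchy--Schwarz on $[0,\bar\tau]$ produce a non-integrable weight when $(n-1)C$ is large. The fix is to work one scale at a time: split $[0,\bar\tau]$ dyadically into $I_k=[\bar\tau/2^{k+1},\bar\tau/2^k]$, so that on each $I_k$ the distortion between $g(-\tau)$ and $g(-\bar\tau/2^k)$ is controlled by a fixed constant depending only on $n,C$. For an arbitrary competitor $\gamma$ from $p$ to $x$, a single Cauchy--Schwarz on $I_k$ bounds $\int_{I_k}\sqrt{\tau}\,|\gamma'|^2_{g(-\tau)}\,d\tau$ from below by a constant multiple of $d_{g(-\bar\tau/2^k)}(\gamma(\bar\tau/2^{k+1}),\gamma(\bar\tau/2^k))^2$ with an appropriate $\bar\tau$-dependent weight, while the scalar-curvature contribution is again absorbed into an additive constant via $|R|\leq nC/\tau$. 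Summing, converting all distances to the single scale $g(-\bar\tau)$ with the distortion estimate, and applying the triangle inequality $d\leq \sum_k d_{g(-\bar\tau)}(\gamma(\bar\tau/2^{k+1}),\gamma(\bar\tau/2^k))$ collects the contributions into a lower bound of the form $c\,d^2/\bar\tau-A$; taking the infimum over $\gamma$ gives the lower bound in (1).

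Finally, (2) and (3) follow from Perelman's standard identities for the reduced distance at points of smoothness, $|\nabla l|^2+R=l/\tau-K/\tau^{3/2}$ and $2\partial_\tau l+|\nabla l|^2+R=l/\tau$, where $K$ is Hamilton's integrated correction along the minimizing $\mathcal L$-geodesic. The estimate (1) and the Type I bound $|\Rm|\leq C/\tau$ jointly control $l/\tau$, $R$, and $K/\tau^{3/2}$ (the last via the $\mathcal L$-length representation of $K$ and the bound on the $\mathcal L$-geodesic velocity derived from (1)); algebraic rearrangement then yields (2) and (3). The main obstacle throughout is the lower bound in (1), for the reason explained above: Type I distortion inevitably introduces a weight singular at $\tau=0$, and only the scale-by-scale organisation prevents this singularity from destroying the Cauchy--Schwarz estimate.
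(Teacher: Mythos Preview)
First, note that the paper does not prove this proposition: it is quoted from \cite{Naber} as a black box, so there is no in-paper argument to compare against. Your upper bound in (1) and your plan for (2)--(3) via Perelman's first-variation identities and a velocity bound on the minimizing $\mathcal L$-geodesic are the standard route and are fine in outline.

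The gap is in your lower bound for (1). The dyadic decomposition correctly yields, on each $I_k=[\bar\tau/2^{k+1},\bar\tau/2^k]$,
\[
\int_{I_k}\sqrt\tau\,|\gamma'|_{g(-\tau)}^2\,d\tau \;\gtrsim\; 2^{k/2}\,\frac{d_k^2}{\sqrt{\bar\tau}},\qquad d_k:=d_{g(-\bar\tau/2^k)}\big(\gamma(\bar\tau/2^{k+1}),\gamma(\bar\tau/2^k)\big),
\]
but converting $d_k$ to the single scale $g(-\bar\tau)$ via the multiplicative distortion costs a factor $2^{(n-1)Ck}$ in distance, so the sum you must control from below is $\sum_k 2^{k(1/2-2(n-1)C)}\tilde d_k^2$ with $\tilde d_k=d_{g(-\bar\tau)}(\gamma(\bar\tau/2^{k+1}),\gamma(\bar\tau/2^k))$. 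Passing from $d^2\le(\sum_k\tilde d_k)^2$ to this weighted sum by Cauchy--Schwarz requires $\sum_k 2^{-k(1/2-2(n-1)C)}<\infty$, which fails whenever $C>1/(4(n-1))$. Concretely, since $\gamma$ is an \emph{arbitrary} competitor, one can concentrate all of $d$ in a single $\tilde d_K$ with $K$ large and make the weighted sum as small as desired. So the conversion step re-imports exactly the blow-up the dyadic splitting was meant to avoid.

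What actually works (and is Naber's argument) reverses the order: from the upper bound and the $\mathcal L$-geodesic equation under Type~I one first proves the uniform velocity estimate $\tau|\gamma'(\tau)|_{g(-\tau)}^2\le A(l_{\mathfrak g}(x,\bar\tau)+1)$ along the \emph{minimizing} $\mathcal L$-geodesic; this already gives (2) and (3). The lower bound in (1) then follows by bounding $d_{g(-\bar\tau)}(p,x)$ via a telescoping over dyadic scales, but using Perelman's \emph{additive} distance-distortion estimate under $\ric\le (n-1)C/\tau$, namely $\partial_\tau d_{g(-\tau)}\le C'/\sqrt\tau$, rather than the multiplicative one. The additive error sums to $C'\sum_k\sqrt{\bar\tau/2^k}<\infty$, which is what rescues the argument; your multiplicative distortion does not.
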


 Now, whenever $\mathfrak g_i\rightarrow \mathfrak g$, with $\mathfrak g_i,\mathfrak g\in \rf$, it is possible to limit out the reduced distance functions of $\mathfrak g_i$ using the estimates in Proposition \ref{red_dist_est}, as is done in \cite{Naber}. This motivates the following definition.

\begin{definition}
Let $\mathfrak g=(M, g(t),p)_{t\in (-T,0)}\in \rf$. A function $l: M\times (0, T)\rightarrow \mathbb R$ is called a singular reduced distance if the following holds. There exists a sequence $\mathfrak g_i\in \rfr$ converging to $\mathfrak g$ in the topology of $\rf$ such that $l_{g_i}\rightarrow l$ in $C^{0,\alpha}_{loc}$.
\end{definition}

\begin{remark}\label{rmk_s_r_d}
Since $\overline{\rfr}=\mathcal{RF}^C$ it follows that the set of singular reduced distance functions of $\mathfrak g\in\rf$ is non-empty. Moreover, since the estimates of Proposition \ref{red_dist_est} pass to the limit, it follows that the space of singular reduced distance functions of $\mathfrak g\in\rf$ is compact in the $C^{0,\alpha}_{loc}$ topology.
\end{remark}

Given a singular reduced distance $l$ on $\mathfrak g$ and $\tau\in (0,T)$, following \cite{Perelman1} we define the reduced volume associated to $l$ as
\begin{eqnarray}
\rv_{\mathfrak g,l}(\tau):=\int_M (4\pi \tau)^{-\frac{n}{2}} e^{-l(\cdot,\tau)} d\mu_g.
\end{eqnarray}
\begin{remark}
Note that if $\mathfrak g\in \rfr$ then any singular reduced distance function $l$ of $\mathfrak g$ is given by (\ref{r_dist}). To see this, consider $\mathfrak g_i\rightarrow \mathfrak g$, with $\mathfrak g_i,\mathfrak g\in \rfr$. By Perelman's pseudolocality theorem (see \cite{Perelman1}) it follows that $\mathfrak g_i$ have uniformly bounded curvature in time intervals $[-a,0]$, $a<T$,  hence they converge to $\mathfrak g$ uniformly locally in  $M\times (-T,0]$. Hence, the reduced distance functions $l_{\mathfrak g_i}$ pointwize converge to $l_{\mathfrak g}$.
\end{remark}
In the following lemma we collect a few useful facts about $\rv_{\mathfrak g,l}(\; \cdot\;)$. 

\begin{lemma}\label{red_vol_props}
The reduced volume $\rv_{\mathfrak g,l}(\tau)$ with respect to a singular reduced distance $l$ of $\mathfrak g\in \rf$ is non-increasing in $\tau$. Moreover, if there exist $0<\tau_1<\tau_2$ such that
\begin{eqnarray}
\rv_{\mathfrak g, l}(\tau_1)=\rv_{\mathfrak g, l}(\tau_2),
\end{eqnarray}  
then $g(t)$ is a gradient shrinking Ricci soliton, namely 
\begin{eqnarray}
\ric(g(-\tau))+\hess_{g(-\tau)} l(\cdot,\tau)=\frac{1}{2\tau} g(-\tau),\label{soliton_eqn}
\end{eqnarray}
and $l(\cdot,1)$ is a normalized soliton function.
\end{lemma}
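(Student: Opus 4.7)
\emph{Plan.} The approach is to transfer Perelman's classical monotonicity and rigidity to the singular limit via approximation. By Remark \ref{rmk_s_r_d} fix a sequence $\mathfrak g_i=(M_i,g_i(t),p_i)\in\rfr$ with $\mathfrak g_i\to\mathfrak g$ and $l_{\mathfrak g_i}\to l$ in $C^{0,\alpha}_{loc}$. For each $i$, the classical Perelman differential identity reads
\begin{eqnarray}
\rv_{\mathfrak g_i,l_{\mathfrak g_i}}(\tau_1)-\rv_{\mathfrak g_i,l_{\mathfrak g_i}}(\tau_2)=\int_{\tau_1}^{\tau_2}\!\int_{M_i}2\tau\left|\ric_{g_i}+\hess_{g_i} l_{\mathfrak g_i}-\frac{g_i}{2\tau}\right|^2_{g_i}v_i\,d\mu_{g_i(-\tau)}\,d\tau,\nonumber
\end{eqnarray}
where $v_i:=(4\pi\tau)^{-n/2}e^{-l_{\mathfrak g_i}}$. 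The nonnegativity of the right-hand side is the source of monotonicity, and its vanishing is precisely the soliton equation. The remaining tasks are to pass the reduced volumes to the limit and to extract the soliton structure in the equality case.

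\emph{Monotonicity.} I would prove $\rv_{\mathfrak g_i,l_{\mathfrak g_i}}(\tau)\to\rv_{\mathfrak g,l}(\tau)$ by dominated convergence. On relatively compact subsets of $M$ the Cheeger-Gromov convergence gives smooth convergence of $g_i(-\tau)$ and uniform convergence of $l_{\mathfrak g_i}$, hence pointwise convergence of the integrand $v_i\,d\mu_{g_i(-\tau)}$. Outside such sets, the Gaussian lower bound in Proposition \ref{red_dist_est}(1) yields
\begin{eqnarray}
v_i\leq (4\pi\tau)^{-n/2}e^A\exp\left(-A^{-1}\left(1+d_{g_i(-\tau)}(p_i,\cdot)/\sqrt\tau\right)^2\right),\nonumber
\end{eqnarray}
and Bishop-Gromov volume comparison based at $p_i$ (available thanks to the Type I bound on $\ric_{g_i}$) dominates $v_i\,d\mu_{g_i(-\tau)}$ by a universal integrable Gaussian on $\mathbb R^n$. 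Dominated convergence then yields pointwise convergence of the reduced volumes, so monotonicity descends to the limit.

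\emph{Equality case.} Suppose $\rv_{\mathfrak g,l}(\tau_1)=\rv_{\mathfrak g,l}(\tau_2)$. By the monotonicity just established, $\rv_{\mathfrak g,l}$ is constant on $[\tau_1,\tau_2]$, and hence the left-hand side of the Perelman identity tends to zero as $i\to\infty$. On any relatively compact $K\subset M$, the Type I curvature bounds of $g_i$ combined with the parabolic equation satisfied by $l_{\mathfrak g_i}$ allow a local regularity bootstrap upgrading $l_{\mathfrak g_i}\to l$ to $C^{2,\alpha}_{loc}$, so that the integrand converges to $2\tau|\ric_g+\hess_g l-g/(2\tau)|^2_g v$ on $K\times(\tau_1,\tau_2)$. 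The double integrals converging to zero then force $|\ric_g+\hess_g l-g/(2\tau)|^2_g v\equiv 0$; since $v>0$, the soliton equation holds on $M\times[\tau_1,\tau_2]$, and elliptic regularity applied to $\hess_g l=g/(2\tau)-\ric_g$ (with $\ric_g$ smooth) makes $l$ smooth. Self-similarity under the diffeomorphism flow generated by $\nabla^g l$ then propagates the soliton equation to all $\tau\in (0,T)$. For the normalization, the trace of the soliton equation gives $\Delta l=n/(2\tau)-R$; substituting into the limit of Perelman's identity $2\Delta l-|\nabla l|^2+R+(l-n)/\tau=0$, which holds as an equality in the soliton case, yields $R+|\nabla l|^2-l/\tau=0$, so at $\tau=1$ we obtain $R+|\nabla l|^2-l=0$, i.e.\ $l(\cdot,1)$ is a normalized soliton function. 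I anticipate the main technical obstacle to be the $C^{2,\alpha}_{loc}$ regularity upgrade for $l_{\mathfrak g_i}$ needed to pass the integrand to the limit; this should follow from local Schauder estimates on the parabolic equation satisfied by $l_{\mathfrak g_i}$, exploiting the uniform curvature bounds of $g_i$ on the relevant compact sets.
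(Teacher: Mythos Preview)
The paper itself does not prove this lemma; it simply cites Naber's paper (Lemma 2.8 and Theorem 2.1 there). Your proposal is essentially a reconstruction of Naber's argument, and the overall strategy---approximate by flows in $\rfr$, use Perelman's monotonicity formula for the approximants, and pass to the limit---is the right one. The monotonicity part and the normalization computation at the end are fine.

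There is, however, a genuine gap in your treatment of the equality case. You propose to upgrade the convergence $l_{\mathfrak g_i}\to l$ to $C^{2,\alpha}_{loc}$ via ``local Schauder estimates on the parabolic equation satisfied by $l_{\mathfrak g_i}$.'' But the reduced distance $l_{\mathfrak g_i}$ is in general only locally Lipschitz: it fails to be $C^2$ along the $\mathcal L$-cut locus, and it satisfies Perelman's differential \emph{inequalities} (in the barrier or distributional sense), not a parabolic equation. So there is no Schauder bootstrap available for the approximants, and you cannot pass $\hess_{g_i} l_{\mathfrak g_i}$ to the limit pointwise as written.

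The argument that actually works (and is what Naber does) reverses the order: first pass the differential inequality for $v_i=(4\pi\tau)^{-n/2}e^{-l_{\mathfrak g_i}}$, namely $\partial_\tau v_i-\Delta_{g_i} v_i+R_{g_i}v_i\leq 0$, to the limit in the distributional sense, obtaining the same inequality for $v=(4\pi\tau)^{-n/2}e^{-l}$. Monotonicity of $\rv_{\mathfrak g,l}$ then follows by integrating against the constant function $1$. In the equality case $\rv_{\mathfrak g,l}(\tau_1)=\rv_{\mathfrak g,l}(\tau_2)$, the integral of the nonpositive distribution $\partial_\tau v-\Delta_g v+R_g v$ over $M\times[\tau_1,\tau_2]$ vanishes, forcing it to vanish identically; thus $v$ is a \emph{weak solution} of the conjugate heat equation. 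Parabolic regularity now applies to $v$ (not to the approximants), making $v$ and hence $l$ smooth, after which the soliton equation and your normalization computation go through. In short: establish smoothness of the limit $l$ from the equality, rather than trying to extract uniform second-order control on the non-smooth approximants.
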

\begin{proof}
The monotonicity statement is essentially Lemma 2.8 in \cite{Naber}. The statement (\ref{soliton_eqn}) and the fact that $l(\cdot,1)$ is a normalized soliton function are proven together with Theorem 2.1 in \cite{Naber}.
\end{proof}

\subsection{The non-inflating property of the Ricci flow.}
Now we recall the non-inflating propety of smooth compact Ricci flows, as it appears in Zhang \cite{Zhang}. A similar result was also obtained by Chen and Wang in \cite{WangChen} under additional assumptions. The result in \cite{Zhang} however is more suitable for the setting of Type I Ricci flows.

\begin{theorem}[Theorem 1.1 in \cite{Zhang}]\label{noninflatingthm}
Let $(M^n,g(t))_{t\in [0,t_0]}$ be a smooth and compact Ricci flow. Then for every $\alpha>0$ there exists a $\kappa>0$, depending on $n,\alpha, g(0)$, with the following property. If for some $x_0\in M$ and $r\in (0,\sqrt{t_0})$ the estimate
\begin{eqnarray}
 R(g(t))\leq \frac{\alpha}{t_0-t},
\end{eqnarray}
holds in $B_g(x_0,t_0,r)\times [t_0-r^2,t_0]$, then 
\begin{eqnarray}
 \vol_{g(t_0)}(B_g(x_0,t_0,r))\leq \kappa r^n.
\end{eqnarray} 
\end{theorem}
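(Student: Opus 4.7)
The plan is to follow Q. Zhang's approach in \cite{Zhang}, which establishes a non-inflating counterpart to Perelman's non-collapsing theorem. The strategy is to use the conjugate heat kernel based at $(x_0, t_0)$ as a test object: since it integrates to one on every time slice, any region on which one can produce a pointwise lower bound on the kernel must have correspondingly bounded volume.

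First, I would introduce $u(y,t)$, the fundamental solution of the conjugate heat equation $(\partial_t + \Delta_{g(t)} - R_{g(t)}) u = 0$ on $M \times [t_0 - r^2, t_0)$ with $u(\cdot, t) \to \delta_{x_0}$ as $t \to t_0^-$, so that $\int_M u\, d\mu_{g(t)} = 1$ for every $t < t_0$. Writing $u = (4\pi\tau)^{-n/2} e^{-f}$ with $\tau = t_0 - t$, Perelman's $\mathcal{W}$-entropy
\[
\mathcal{W}(g(t), f(t), \tau) = \int_M \bigl[\tau (R + |\nabla f|^2) + f - n\bigr]\, u\, d\mu_{g(t)}
\]
is non-decreasing in $t$ along the coupled pair (\ref{rf_eqn})--conjugate heat equation, and tends to $0$ as $t \to t_0^-$. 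Evaluated at $t = t_0 - r^2$, this yields $\mathcal{W}(g(t_0 - r^2), f(t_0 - r^2), r^2) \leq 0$.

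Combining this integral bound with the hypothesis $R \leq \alpha/(t_0-t)$ on the parabolic cylinder and the log-Sobolev inequality along the flow (whose constant is inherited from $g(0)$ via Perelman's $\mu$-entropy), I would next upgrade to a pointwise lower bound on $u$ at time $t_0 - r^2$, of the form $u(y, t_0 - r^2) \geq c(n, \alpha, g(0))\, r^{-n}$ for $y \in B_g(x_0, t_0 - r^2, \theta r)$ for some $\theta = \theta(n, \alpha) > 0$. Integrating this pointwise estimate against $\int u\, d\mu_{g(t_0 - r^2)} = 1$ immediately yields $\vol_{g(t_0 - r^2)}(B_g(x_0, t_0 - r^2, \theta r)) \leq r^n/c$. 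To transfer this to time $t_0$, I would invoke (i) Perelman's distance-distortion estimate (using the scalar curvature bound and the maximum-principle bound $R \geq \min_M R(g(0))$) to show $B_g(x_0, t_0, r) \subseteq B_g(x_0, t_0 - r^2, C r)$ for a constant $C = C(n, \alpha)$, and (ii) the volume-element evolution $\partial_t\, d\mu_{g(t)} = -R\, d\mu_{g(t)}$ together with the same lower bound on $R$ to control the ratio $d\mu_{g(t_0)}/d\mu_{g(t_0 - r^2)}$. Combining these two ingredients with the previous step gives the desired estimate $\vol_{g(t_0)}(B_g(x_0, t_0, r)) \leq \kappa r^n$.

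The main obstacle is converting the integral $\mathcal{W}$-entropy bound into a pointwise lower bound on the heat kernel. This upgrade requires the log-Sobolev inequality along the Ricci flow, whose constant is controlled by $g(0)$, and it is precisely where the dependence of $\kappa$ on the initial metric enters. A secondary difficulty is the distance-distortion step: because only scalar curvature, and not the full Ricci tensor, is assumed bounded in the cylinder, one cannot simply integrate $\ric(\dot\gamma, \dot\gamma)$ along geodesics, and must instead rely on subtler arguments that exploit $R$ alone.
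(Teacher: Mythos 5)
Note first that the paper does not prove this statement: it is quoted verbatim as Theorem 1.1 of \cite{Zhang} and used as a black-box preliminary (compare the non-inflating estimate (\ref{noninflate}) derived immediately afterwards), so the ``paper's own proof'' is simply the citation, and your proposal has to be measured against Zhang's original argument. Your scaffolding is the right one and matches Zhang's: work with the conjugate heat kernel $u$ based at $(x_0,t_0)$, use $\int_M u\,d\mu_{g(t)}=1$ for every $t<t_0$, produce a pointwise lower bound $u\ge c\,r^{-n}$ on a suitable set, and convert this into a Euclidean upper bound on volume.

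The genuine gap is in the pivotal step from the integral inequality to the pointwise one, which you yourself single out as the ``main obstacle''. You propose to upgrade $\mathcal W(g(t_0-r^2),f,r^2)\le 0$ to a pointwise bound $u\ge c(n,\alpha,g(0))r^{-n}$ by invoking a log-Sobolev inequality along the flow, but this mechanism points in the wrong direction: the log-Sobolev/Perelman $\mu$-entropy machinery controls $\kappa$-non-collapsing (volume \emph{lower} bounds, and heat-kernel \emph{upper} bounds via Davies iteration), whereas a non-inflating estimate needs a heat-kernel \emph{lower} bound. The ingredient that actually does this in \cite{Zhang} is Perelman's differential Harnack inequality for the conjugate heat kernel,
\begin{eqnarray}
u(y,t_0-\tau)\ \ge\ (4\pi\tau)^{-n/2}e^{-\ell(y,\tau)},\nonumber
\end{eqnarray}
where $\ell$ is the reduced distance based at $(x_0,t_0)$ (the very object the present paper builds on from Definition \ref{reduced_distance} onward). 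The hypothesis $R\le\alpha/(t_0-t)$ on the parabolic cylinder then feeds into an \emph{upper} bound on $\ell(\cdot,r^2)$ over the set $B_g(x_0,t_0,r)$, and the pointwise lower bound on $u$, hence the volume bound, drops out without passing through any functional inequality and without the metric-ball inclusion $B_g(x_0,t_0,r)\subseteq B_g(x_0,t_0-r^2,Cr)$ that you correctly observe is hard to justify under scalar-only control: the two time slices are compared on the fixed set $B_g(x_0,t_0,r)$ only through $\partial_t\,d\mu_{g(t)}=-R\,d\mu_{g(t)}$ and the maximum-principle lower bound on $R$. So your outline assembles the right cast of characters but omits the reduced-distance Harnack inequality, which is the actual engine of Zhang's proof, and the log-Sobolev substitute you offer in its place does not deliver the required estimate.
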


The non-inflating property has the following consequence for Type I flows, which is a direct consequence of Theorem \ref{noninflatingthm} and the Type I curvature bound.

\begin{corollary}
 Let $(M,g(t),x_0)_{t\in [-T,0)}\in \rf$. Then, there exists a $\kappa_0>0$ depending on $n,g(-T),C$, such that for every $r\in (0,\sqrt{\frac{T}{2}})$ and $t_0\in (-\frac{T}{2},0)$
\begin{eqnarray}
 \vol_{g(t_0)}(B_g(x_0,t_0,r))\leq \kappa_0 r^n.\label{noninflate}
\end{eqnarray}
\end{corollary}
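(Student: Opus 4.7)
The plan is to reduce the corollary to a direct application of Theorem \ref{noninflatingthm} by shifting time so the flow starts at $s=0$ and then verifying the scalar curvature hypothesis with a constant $\alpha$ depending only on $n$ and $C$.

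First, I would shift time by setting $\tilde g(s) = g(s-T)$ for $s \in [0, T+t_0]$. This is a smooth Ricci flow on the compact manifold $M$ with initial data $\tilde g(0) = g(-T)$. Its terminal time is $s_0 = T+t_0$, and since $t_0 \in (-T/2,0)$ we have $s_0 > T/2$. In particular, any $r \in (0,\sqrt{T/2})$ automatically satisfies $r \in (0,\sqrt{s_0})$ and $s_0 - r^2 > 0$, so the parabolic cylinder $B_{\tilde g}(x_0,s_0,r) \times [s_0 - r^2, s_0]$ lies inside the domain of $\tilde g$.

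Next, I would translate the Type I curvature bound into the form required by Theorem \ref{noninflatingthm}. From $|\Rm(g(t))|_g \leq C/|t|$ one obtains $R(g(t)) \leq c(n)\, C/|t|$ for some dimensional constant $c(n)$. For $t \in [t_0 - r^2, t_0]$, writing $|t| = |t_0| + (t_0 - t)$ shows that $|t| \geq t_0 - t$, hence
\begin{eqnarray}
R(g(t)) \leq \frac{c(n)\,C}{|t|} \leq \frac{c(n)\,C}{t_0 - t}. \nonumber
\end{eqnarray}
In the shifted time variable, with $s_0 - s = t_0 - t$, this reads $R(\tilde g(s)) \leq \alpha/(s_0 - s)$ on all of $M \times [s_0 - r^2, s_0)$, with $\alpha = c(n)\, C$. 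The bound is trivial at $s = s_0$ since the right-hand side diverges there, and $\tilde g$ is smooth up to that time.

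Finally, I would apply Theorem \ref{noninflatingthm} to $\tilde g$ with this choice of $\alpha$. The theorem supplies a constant $\kappa$ depending on $n$, $\alpha$ and $\tilde g(0) = g(-T)$, so that
\begin{eqnarray}
\vol_{\tilde g(s_0)}(B_{\tilde g}(x_0,s_0,r)) \leq \kappa\, r^n. \nonumber
\end{eqnarray}
Unshifting gives exactly \eqref{noninflate} with $\kappa_0 = \kappa(n, c(n)C, g(-T))$, which depends only on $n$, $C$ and $g(-T)$ as claimed. There is no real obstacle in the argument; the only point that requires attention is the elementary inequality $|t| \geq t_0 - t$, which is what allows the fixed Type I bound with singularity at $t=0$ to be dominated by the ``end-time'' singular bound needed in Zhang's theorem.
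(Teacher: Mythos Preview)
Your proof is correct and follows essentially the same approach as the paper: bound the scalar curvature by $c(n,C)/|t| \leq c(n,C)/(t_0-t)$ using the Type I hypothesis, then apply Theorem~\ref{noninflatingthm}. The only difference is that you make the time shift to $[0,s_0]$ and the verification $r<\sqrt{s_0}$ explicit, which the paper leaves implicit.
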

\begin{proof}
 Since $(M,g(t),x)_{t\in [-T,0)}\in \rf$, there exists $c(n,C)>0$ such that $R(g(t))\leq \frac{c(n,C)}{|t|} \leq \frac{c(n,C)}{t_0-t}$ on $M\times [t_0-r^2,t_0]$. Estimate (\ref{noninflate}) then follows from Theorem \ref{noninflatingthm}. 
\end{proof}

\section{The reduced volume based at the singular time}\label{def_of_density}

In this section we define a monotone quantity which plays the role of a reduced volume based at a singular time and consider the associated density function. 

In particular, the compactness property of Remark \ref{rmk_s_r_d}  allows the following definition.
\begin{definition}\label{singular_red_vol}
We define the singular reduced volume at scale $\tau>0$  of $\mathfrak g\in \rf$ as
\begin{eqnarray}
\rv_{\mathfrak g}(\tau)=\rv_{\mathfrak g, \bar l}(\tau),\nonumber
\end{eqnarray}
where $\bar l$ is the minimizer of $\rv_{\mathfrak g,l}(\tau)$ among all singular reduced distances $l$ of $\mathfrak g$.
\end{definition}

A direct implication of this definition is the following.
\begin{proposition}\label{rv_props}
Let $ \mathfrak g=(M,g(t),p)_{t\in (-T,0)} \in \rf$. Then
\begin{enumerate}
\item If  $0<\tau_1<\tau_2$ then $\rv_{\mathfrak g}(\tau_1) \geq \rv_{\mathfrak g}(\tau_2)$.
\item If $\rv_{\mathfrak g}(\tau_1) = \rv_{\mathfrak g}(\tau_2)$ for some $0<\tau_1<\tau_2$, then there exists a singular reduced distance $l$ of $\mathfrak g$ such that $\rv_{\mathfrak g}(\tau)=\rv_{\mathfrak g,l}(\tau)$, for every $\tau\in (0,T)$. Moreover, $l$ is a normalized soliton function.
\item Let $\mathfrak g_i=(M_i,g_i(t),p_i)\in \rf$ such that $\mathfrak g_i\rightarrow \mathfrak g$. Then for every $\tau\in(0,T)$
\begin{eqnarray}
\liminf_i \rv_{\mathfrak g_i}(\tau)\geq \rv_{\mathfrak g} (\tau). \nonumber
\end{eqnarray}
\end{enumerate}
\end{proposition}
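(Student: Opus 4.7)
My plan is to base everything on Lemma~\ref{red_vol_props} applied to a carefully chosen minimising singular reduced distance, together with the compactness furnished by Remark~\ref{rmk_s_r_d} and the uniform Gaussian envelope for $e^{-l(\cdot,\tau)}$ coming from Proposition~\ref{red_dist_est}(1). The first thing to verify is that the infimum in Definition~\ref{singular_red_vol} is actually attained at some singular reduced distance $\bar l$. Given a minimising sequence $l_i$, Remark~\ref{rmk_s_r_d} lets me extract a $C^{0,\alpha}_{loc}$ limit $l_\infty$ that is itself a singular reduced distance, and the lower bound in Proposition~\ref{red_dist_est}(1) gives a Gaussian weight that dominates each $e^{-l_i(\cdot,\tau)}$ uniformly, so dominated convergence hands me $\rv_{\mathfrak g,l_i}(\tau)\to\rv_{\mathfrak g,l_\infty}(\tau)$, and $l_\infty$ realises the infimum.

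For (1), I would let $\bar l$ realise $\rv_\mathfrak g(\tau_1)$; Lemma~\ref{red_vol_props} applied to $\bar l$ itself yields
\[
\rv_\mathfrak g(\tau_1)=\rv_{\mathfrak g,\bar l}(\tau_1)\geq\rv_{\mathfrak g,\bar l}(\tau_2)\geq\rv_\mathfrak g(\tau_2).
\]
For (2), the same $\bar l$ under the equality hypothesis now satisfies $\rv_{\mathfrak g,\bar l}(\tau_1)=\rv_{\mathfrak g,\bar l}(\tau_2)$, so the rigidity clause of Lemma~\ref{red_vol_props} turns $g(t)$ into a gradient shrinking soliton with $\bar l(\cdot,1)$ a normalised soliton function and $\bar l(\cdot,\tau)$ satisfying \eqref{soliton_eqn} for every $\tau$. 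Pulling back by the soliton diffeomorphisms identifies each $\bar l(\cdot,\tau)$ with a normalised soliton function on the fixed slice $(M,g(-1))$, so Lemma~\ref{f_vol} forces $\tau\mapsto\rv_{\mathfrak g,\bar l}(\tau)$ to be constant on $(0,T)$. Combining this constancy with $\rv_\mathfrak g\leq\rv_{\mathfrak g,\bar l}$ and the monotonicity from (1) pinches $\rv_\mathfrak g(\tau)=\rv_{\mathfrak g,\bar l}(\tau)$ throughout $(0,T)$; for scales $\tau>\tau_1$ I would reapply the pinching after sliding the base point forward, using the self-similarity of the soliton flow to see that the equality of reduced volumes must persist between any pair of scales.

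For (3), set $\alpha=\liminf_i\rv_{\mathfrak g_i}(\tau)$ and pass to a subsequence along which this liminf is realised. Let $\bar l_i$ be a minimiser for $\mathfrak g_i$; by the definition of a singular reduced distance there is $\mathfrak g_i^j\in\rfr$ with $\mathfrak g_i^j\to\mathfrak g_i$ and $l_{\mathfrak g_i^j}\to\bar l_i$ in $C^{0,\alpha}_{loc}$, and the Gaussian envelope promotes this to $\rv_{\mathfrak g_i^j,l_{\mathfrak g_i^j}}(\tau)\to\rv_{\mathfrak g_i,\bar l_i}(\tau)$. A diagonal choice $\mathfrak h_i=\mathfrak g_i^{j_i}$ then satisfies $\mathfrak h_i\to\mathfrak g$ in $\rf$ and $\rv_{\mathfrak h_i,l_{\mathfrak h_i}}(\tau)\to\alpha$. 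Remark~\ref{rmk_s_r_d} lets me pass to a further subsequence with $l_{\mathfrak h_i}\to l_\infty$ in $C^{0,\alpha}_{loc}$, where $l_\infty$ is a singular reduced distance for $\mathfrak g$, and another application of dominated convergence finishes with $\alpha=\rv_{\mathfrak g,l_\infty}(\tau)\geq\rv_\mathfrak g(\tau)$. The step I expect to require the most care is this two-layer diagonal construction: for both dominated-convergence steps to close up I need the constant in Proposition~\ref{red_dist_est} to be uniform across the entire approximating family, which is exactly what the dependence $A=A(n,C)$ supplies.
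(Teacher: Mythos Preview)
The paper itself does not give a proof of this proposition; it is stated as ``a direct implication of this definition'' and left to the reader. Your write-up therefore supplies strictly more than the paper does, and for parts (1) and (3) your arguments are correct: the pinching chain $\rv_\mathfrak g(\tau_1)=\rv_{\mathfrak g,\bar l}(\tau_1)\geq\rv_{\mathfrak g,\bar l}(\tau_2)\geq\rv_\mathfrak g(\tau_2)$ from Lemma~\ref{red_vol_props} is exactly the right mechanism, and the two-layer diagonal argument for (3) is the natural way to manufacture, out of the minimisers $\bar l_i$, a single approximating sequence in $\rfr$ whose reduced distances converge to a singular reduced distance of $\mathfrak g$ with the desired reduced volume. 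Your remark that the dominated convergence steps close up precisely because $A=A(n,C)$ in Proposition~\ref{red_dist_est} is uniform over the whole family is the key point (one should also note that the Gaussian envelope is integrable uniformly along the sequence, which follows from the Type~I bound via Bishop--Gromov or from the non-inflating property).

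The one place your argument is incomplete is the last sentence of (2), the case $\tau>\tau_2$. You know $\rv_{\mathfrak g,\bar l}\equiv c$ and hence $\rv_\mathfrak g(\tau)\leq c$ for all $\tau$; monotonicity gives $\rv_\mathfrak g(\tau)=c$ on $(0,\tau_2]$. For $\tau_3>\tau_2$ you propose to ``slide the base point forward, using the self-similarity of the soliton flow''. But the self-similarity $\lambda^{-1}g(\lambda t)=\phi_{-\lambda}^*g(t)$ moves the basepoint from $p$ to $\phi_{-\lambda}(p)$, so it relates $\rv_{\mathfrak g_p}(\lambda\tau)$ to a reduced volume based at a \emph{different} point, and the pinching does not immediately transfer. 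One clean way to close this gap is to extend the soliton to an ancient flow and invoke the blow-down argument that later becomes Lemma~\ref{density_soliton}(1): for \emph{any} singular reduced distance $l'$ of $\mathfrak g$ one has $\lim_{\tau\to\infty}\rv_{\mathfrak g,l'}(\tau)=\int_M(4\pi)^{-n/2}e^{-f}d\mu_{g(-1)}=c$, whence by monotonicity $\rv_{\mathfrak g,l'}(\tau)\geq c$ for every $\tau$ and every $l'$, and therefore $\rv_\mathfrak g(\tau)\geq c$ for all $\tau\in(0,T)$. This is independent of Proposition~\ref{rv_props} and completes (2).
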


In particular, the monotonicity property in Proposition \ref{rv_props} justifies the following definition.
\begin{definition}
The density of $\mathfrak g\in \rf$ is defined as
\begin{eqnarray}
\Theta_{\mathfrak g}:=\lim_{\tau \searrow 0} \rv_{\mathfrak g} (\tau).
\end{eqnarray}
\end{definition}

\begin{remark}
 By Proposition \ref{rv_props},  if $\mathfrak g_i,\mathfrak g\in \rf$ and $\mathfrak g_i\rightarrow \mathfrak g$ it follows that
\begin{eqnarray}
\liminf_{i\rightarrow \infty} \Theta_{\mathfrak g_i}\geq \Theta_{\mathfrak g}.
\end{eqnarray}
\end{remark}
\begin{remark}\label{density_values}
 For every $\mathfrak g\in \rfr$ the reduced volume satisfies $\mathcal V_{\mathfrak g,l_{\mathfrak g}}(\tau) \in (0,1]$ for every $\tau>0$, since $\lim_{\tau\rightarrow 0^+} \mathcal V_{\mathfrak g,l_{\mathfrak g}}(\tau) =1$ (see for instance \cite{ricciflow}). Thus, $\Theta_{\mathfrak g}\in [0,1]$ for every $\mathfrak g\in \rf$. In fact, $\Theta_g >0$, due to Perelman's no-local collapsing theorem.
\end{remark}

\begin{definition}
Let $(M,g(t))_{t\in (-T,0)}$ be a Type I Ricci flow and   $x\in M$. We will always denote $\mathfrak g_x:=(M,g(t),x)_{t\in (-T,0)}$.  Suppose that for some $C>0$, $\mathfrak g_x \in \rf$. Then, the density of $g$ at $x$ is defined as 
\begin{eqnarray}
\Theta_g(x)=\Theta_{\mathfrak g_x}.
\end{eqnarray}
\end{definition}

\begin{remark}
 In \cite{EMT} the authors introduce a different notion of reduced volume based at the singular time and density function for a Type I Ricci flow. In their approach, they consider limits of reduced distance functions arising from different sequences of times converging to the singular time. Then, they define a singular reduced distance by considering the infimum over all these limits, and use this to build a monotone quantity and its associated density. It is not clear to the author if the density function in \cite{EMT} behaves well under Cheeger-Gromov convergence. Instead, the lower semicontinuity is essentially built in the definition of our density.
\end{remark}

The reduced volume  based at the singular time involves minimizing over all approximating sequences of Ricci flows, and in principle is hard to compute. However, for shrinking Ricci solitons, the following lemma provides some information.

\begin{lemma}\label{density_soliton}
Let $\mathfrak g:=(M,g(t),p)_{t\in (-\infty,0)}\in \rf$ be the Ricci flow associated to a normalized gradient shrinking Ricci soliton $(M,g(-1),f)$. Then, 
\begin{enumerate}
\item Let $l$ be an arbitrary singular reduced distance function for $\mathfrak g$. Then 
$$\lim_{\tau\rightarrow\infty} \rv_{\mathfrak g}(\tau)=\lim_{\tau\rightarrow\infty} \rv_{\mathfrak g,l}(\tau)=\int_M (4\pi)^{-\frac{n}{2}} e^{-f}d\mu_{g(-1)}.$$ 
\item If $p\in M$ is a critical point of $f$, then 
\begin{eqnarray}
\Theta_g(p)=\int_M (4\pi)^{-\frac{n}{2}} e^{-f}d\mu_{g(-1)} \leq \Theta_g(x), \nonumber
\end{eqnarray}
for every $x\in M$. In particular the density function $\Theta_g$ on a shrinking Ricci soliton attains a minimum.
\item If a singular reduced distance function $l$ for $\mathfrak g$ is also a soliton function, then 
\begin{eqnarray}
\rv_{\mathfrak g}(\tau)=\rv_{\mathfrak g,l}(\tau), \nonumber
\end{eqnarray}
for every $\tau>0$.
\end{enumerate}
\end{lemma}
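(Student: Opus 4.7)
The key feature of $\mathfrak g$ is its self-similarity through the soliton diffeomorphisms: $g(-\tau) = \tau \phi_{-\tau}^* g(-1)$ for every $\tau > 0$. The plan is to exhibit a canonical singular reduced distance with constant reduced volume, and then deduce all three items via monotonicity, Lemma \ref{f_vol}, and the equality case of Lemma \ref{red_vol_props}.

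\textbf{Step 1 (canonical distance).} Define $l_f(x, \tau) := f(\phi_{-\tau}(x))$. Using $d\mu_{g(-\tau)} = \tau^{n/2}\phi_{-\tau}^* d\mu_{g(-1)}$ and the invariance of integration under $\phi_{-\tau}$,
$$\rv_{\mathfrak g, l_f}(\tau) = (4\pi)^{-n/2}\int_M e^{-f}\,d\mu_{g(-1)} =: A$$
for every $\tau > 0$. I would realize $l_f$ as a singular reduced distance by approximating $\mathfrak g$ with the shifted flows $\mathfrak g_i := (M, g(t+T_i), p) \in \rfr$ for $T_i \searrow 0$, and showing that the classical reduced distances $l_{\mathfrak g_i}$ converge in $C^{0,\alpha}_{loc}$ to $l_f$, using the soliton equation (\ref{soliton_eqn}) together with the uniform estimates of Proposition \ref{red_dist_est}.

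\textbf{Step 2 (Part 1).} Let $l$ be any singular reduced distance and set $L := \lim_{\tau \to \infty}\rv_{\mathfrak g, l}(\tau)$, which exists by monotonicity. Using self-similarity, $g^{(j)}(t) := \tau_j^{-1}g(\tau_j t) = \phi_{-\tau_j}^* g(t)$ for any $\tau_j \to \infty$; hence $\mathfrak g^{(j)} := (M, g^{(j)}(t), p)$ is isometric to $(M, g(t), \phi_{-\tau_j}(p))$ and subsequentially converges in $\rf$ to a flow $\mathfrak g_\infty$ isometric to $\mathfrak g$. The rescaled singular reduced distances $l^{(j)}(\cdot, \tau) := l(\cdot, \tau_j\tau)$ subconverge by Remark \ref{rmk_s_r_d} to a singular reduced distance $l_\infty$ of $\mathfrak g_\infty$. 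Since $\rv_{\mathfrak g, l}(\tau_j) - \rv_{\mathfrak g, l}(2\tau_j) \to 0$, the limit $\rv_{\mathfrak g_\infty, l_\infty}$ is constant on $[1, 2]$, so Lemma \ref{red_vol_props} forces $l_\infty(\cdot, 1)$ to be a normalized soliton function, and Lemma \ref{f_vol} gives $L = \rv_{\mathfrak g_\infty, l_\infty}(1) = A$. The same limit for $\rv_{\mathfrak g}$ follows since $\rv_{\mathfrak g}(\tau) \leq \rv_{\mathfrak g, l_f}(\tau) = A$ together with the lower bound just obtained.

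\textbf{Step 3 (Parts 3 and 2).} Suppose $l(\cdot, 1)$ is a normalized soliton function. Lemma \ref{f_vol} applied to $l(\cdot, 1)$ and $f$ on the identical $(M, g(-1))$ gives $\rv_{\mathfrak g, l}(1) = A$, so by Step 2 and monotonicity $\rv_{\mathfrak g, l}(\tau) = A$ on $[1, \infty)$. Lemma \ref{red_vol_props} applied to this constancy then yields the soliton equation (\ref{soliton_eqn}) at every $\tau$, so Perelman's monotonicity formula forces $\frac{d}{d\tau}\rv_{\mathfrak g, l} \equiv 0$ and $\rv_{\mathfrak g, l}(\tau) = A = \rv_{\mathfrak g}(\tau)$ on all of $(0, \infty)$, proving Part 3. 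For Part 2, a critical point $p$ of $f$ satisfies $\phi_\tau(p) = p$, and the construction of Step 1 produces a singular reduced distance for $\mathfrak g_p$ whose time-1 slice is a normalized soliton function; Part 3 then gives $\Theta_g(p) = A$. For any $x$, Step 2 applied to $\mathfrak g_x$ and the monotonicity of $\rv_{\mathfrak g_x}$ yield $\Theta_g(x) \geq \lim_{\tau \to \infty}\rv_{\mathfrak g_x}(\tau) = A = \Theta_g(p)$.

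The main obstacle is Step 1: identifying $l_f$ as a $C^{0,\alpha}_{loc}$-limit of classical reduced distances on an approximating sequence in $\rfr$ requires care with the estimates of Proposition \ref{red_dist_est} and the ODE defining $\phi_\tau$. A similarly delicate analysis is needed in Step 2 to pass the integrals of $e^{-l^{(j)}}$ to the limit, relying on the Gaussian decay forced by Proposition \ref{red_dist_est}.
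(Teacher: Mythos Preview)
Your Step~2 is essentially the paper's argument for Part~1: blow down along $\tau_j\to\infty$, use the self-similarity $\tau_j^{-1}g(\tau_j t)=\phi_{-\tau_j}^*g(t)$ to identify the limit with $(M,g(t),q)$ for $q$ a critical point of $f$, and conclude via the equality case of Lemma~\ref{red_vol_props} and Lemma~\ref{f_vol} that the limiting $\bar l$ is a normalized soliton function with $\rv_{\mathfrak g_\infty,\bar l}\equiv A$.

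The real divergence is in Part~2, and this is where you can drop your ``main obstacle'' entirely. You try to manufacture a specific singular reduced distance $l_f=f\circ\phi_{-\tau}$ and then verify it arises as a $C^{0,\alpha}_{loc}$-limit from $\rfr$; as you note, this is delicate. The paper bypasses it by running the \emph{same} rescaling argument as in Part~1 but with $\tau_i\to 0$ instead of $\tau_i\to\infty$. At a critical point $p$ of $f$ one has $\phi_{-\tau_i}(p)=p$, so the blow-up sequence $(M,\tau_i^{-1}g(\tau_i t),p)$ converges back to $\mathfrak g_p$ itself. Exactly as in Step~2, the rescaled $l_i$ subconverge to a singular reduced distance $\bar l$ for $\mathfrak g_p$ with $\rv_{\mathfrak g_p,\bar l}\equiv \lim_{\tau\to 0}\rv_{\mathfrak g_p,l}(\tau)$, forcing $\bar l$ to be a normalized soliton function and hence $\lim_{\tau\to 0}\rv_{\mathfrak g_p,l}(\tau)=A$ for \emph{every} singular reduced distance $l$. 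This gives $\Theta_g(p)=A$ directly, with no need to exhibit any particular $l$ as a soliton function a priori.

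For Part~3 the paper is also shorter: once you assume $l$ is a soliton function (so $\rv_{\mathfrak g,l}$ is constant by the equality case of Perelman's monotonicity), the one-line squeeze
\[
\rv_{\mathfrak g,l}(\tau)\;\geq\;\rv_{\mathfrak g}(\tau)\;\geq\;\lim_{\sigma\to\infty}\rv_{\mathfrak g}(\sigma)\;=\;\lim_{\sigma\to\infty}\rv_{\mathfrak g,l}(\sigma)\;=\;\rv_{\mathfrak g,l}(\tau)
\]
finishes it. Your argument for Part~3 first establishes constancy only on $[1,\infty)$ and then appeals to Lemma~\ref{red_vol_props} to recover the soliton equation ``at every $\tau$''; as stated, that lemma does not immediately extend the conclusion to $(0,1)$, so this step would need more justification than you give.
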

\begin{proof}
The first statement is essentially Theorem 3.2 in \cite{Naber}. We describe its proof again here for completeness. 

Fix a $\bar\tau>0$, and let $l$ be a singular reduced distance for $\mathfrak g$. Take a sequence $\tau_i\rightarrow +\infty$ and define the blow-down sequence $\mathfrak g_i:=(M,\tau_i^{-1}g(\tau_i t), p)_{t\in(-\infty,0)}$ and  set $l_i(\cdot,\tau)=l(\cdot,\tau_i \tau)$. Note that $l_i$ is a singular reduced distance for $\mathfrak g_i$.

By monotonicity and the scaling behaviour of $l$ it follows that 
\begin{eqnarray}
\lim_{\tau\rightarrow \infty}\rv_{\mathfrak g,l}(\tau)=\lim_{i\rightarrow \infty} \mathcal V_{\mathfrak g,l}(\tau_i \bar\tau)=\lim_{i\rightarrow \infty} \mathcal V_{\mathfrak g_i, l_i}(\bar\tau). \label{limit}
\end{eqnarray}
On the other hand, there exists $q\in M$ such that $\mathfrak g_i \rightarrow \mathfrak g_q=(M,g(t),q)_{t\in(-\infty,0)}$. To prove this, observe that since $g(t)=-t\phi_t^* g(-1)$  
\begin{eqnarray}
\tau_i^{-1}g(\tau_i t)&=& (\phi_t^{-1}\circ\phi_{\tau_i t})^* g(t),\label{scaling1} \\
&=&\phi_{-\tau_i}^* g(t), \label{scaling2}   
\end{eqnarray}
since $\phi_{\tau_i t}= \phi_t\circ\phi_{-\tau_i}$.

Hence, the sequence $(M,\tau_i^{-1}g(\tau_i t), p)$ is isometric to $(M,g(t), \phi_{-\tau_i}(p))$. Now, since $\phi_{-\tau_i}(p)\rightarrow_i q$, where $q$ is a critical point of $f$, it follows that $\mathfrak g_i\rightarrow \mathfrak g_q:=(M,g(t),q)$.

Moreover,  there is a singular reduced distance $\bar l$ for $\mathfrak g_q$ such that $l_i \rightarrow \bar l$. Hence, using the estimates in Lemma \ref{red_dist_est} we conclude that for every $\bar\tau>0$
\begin{eqnarray}
\lim_{i\rightarrow \infty} \rv_{\mathfrak g_i,l_i}(\tau) =\mathcal V_{\mathfrak g_q,\bar l}(\bar\tau), \label{bar_l_normalized}
\end{eqnarray}
which together with (\ref{limit}) implies that $\bar l$ is a normalized soliton function, by Lemma \ref{red_vol_props}. Therefore,  using Lemma \ref{f_vol} we obtain
\begin{eqnarray}
\mathcal V_{\mathfrak g_q,\bar l}(\bar\tau)= \int_M (4\pi)^{-\frac{n}{2}} e^{-f}d\mu_{g(-1)}.\label{equal_f_vols}
\end{eqnarray} 
 Finally, combining (\ref{limit}), (\ref{bar_l_normalized}) and (\ref{equal_f_vols}) we obtain that
 \begin{eqnarray}
 \lim_{\tau\rightarrow \infty} \rv_{\mathfrak g,l}(\tau) = \int_M (4\pi)^{-\frac{n}{2}} e^{-f}d\mu_{g(-1)},\label{arv}
 \end{eqnarray}
  for every singular reduced distance function $l$ for $\mathfrak g$. This suffices to prove the first statement of the lemma.

 To prove the second assertion, let $p,x\in M$, $\nabla f(p)=0$ and denote, as usual, $\mathfrak g_p=(M,g(t),p)_{t\in(-\infty,0)}$, $\mathfrak g_x=(M,g(t),x)_{t\in(-\infty,0)}$. We will first prove that 
\begin{eqnarray}
 \Theta_g(p)=\lim_{\tau\rightarrow\infty} \mathcal V_{\mathfrak g_p}(\tau)=\int_M (4\pi)^{-\frac{n}{2}} e^{-f}d\mu_{g(-1)}. \label{density_ctl_point}
\end{eqnarray}

Consider $\tau_i\rightarrow 0$ and observe using (\ref{scaling1})-(\ref{scaling2}) that $\mathfrak g_i=(M,\tau_i^{-1}g(\tau_i t), p)$ is isometric to $(M,g(t), \phi_{-\tau_i}p)$. Since $p$ is a critical point of $f$, it follows that $\phi_{-\tau_i}(p)=p$, hence $\mathfrak g_i\rightarrow \mathfrak g_p$.

As in the proof of the first assertion of the lemma, let $l$ be an arbitrary singular reduced distance for $\mathfrak g$ and set $l_i(\cdot,\tau)=l(\cdot,\tau_i \tau)$. 

By the monotonicity of $\rv_{\mathfrak g,l}(\cdot)$ and the scaling behaviour of $l$, it follows that 
\begin{eqnarray}
 \lim_{\tau\rightarrow 0} \rv_{\mathfrak g,l}(\tau)=\lim_{i\rightarrow \infty} \rv_{\mathfrak g,l}(\tau_i \tau) =\lim_{i\rightarrow \infty} \rv_{\mathfrak g_i,l_i}( \tau).
\end{eqnarray}
Moreover, $\rv_{\mathfrak g_i,l_i}(\tau)\rightarrow \rv_{\mathfrak g,\bar l}(\tau)$, for some singular reduced distance $\bar l$ for $\mathfrak g$, since $\mathfrak g_i\rightarrow \mathfrak g$.

Therefore, $\rv_{\mathfrak g,\bar l}(\tau)=\lim_{\tau\rightarrow 0} \rv_{\mathfrak g,l}(\tau)$, for every $\tau>0$, which implies that $\bar l$ is a normalized soliton function. By Lemma \ref{f_vol} it follows that $\lim_{\tau\rightarrow 0}\rv_{\mathfrak g,l}(\tau)= \int_M (4\pi)^{-\frac{n}{2}}e^{-f}d\mu_g$. Since $l$ was arbitrary, this implies (\ref{density_ctl_point}).

The assertion of the lemma then follows from 
$$\Theta_g(x)\geq \lim_{\tau\rightarrow \infty} \rv_{\mathfrak g_x}(\tau)=\lim_{\tau\rightarrow \infty} \rv_{\mathfrak g_p}(\tau)= \Theta_g (p),$$
where again we used (\ref{arv}). Note that by estimates on the growth of the soliton function of a gradient shrinking Ricci soliton (see for instance \cite{Naber} or \cite{HM}), $f$ always has a critical point, hence $\Theta_g$ always attains a minimum.

 For the last assertion, note that if $l$ is a singular reduced distance for $\mathfrak g$ which is also a soliton function,  then by monotonicity  we obtain, for every $\tau>0$, 
\begin{eqnarray}
\rv_{\mathfrak g, l}(\tau)\geq \rv_{\mathfrak g}(\tau)\geq \lim_{\tau\nearrow \infty} \rv_{\mathfrak g}(\tau)=\lim_{\tau\nearrow \infty} \rv_{\mathfrak g, l}(\tau)= \rv_{\mathfrak g, l}(\tau).
\end{eqnarray}
Hence $\rv_{\mathfrak g}(\tau)=\rv_{\mathfrak g,l}(\tau)=\lim_{\tau\nearrow \infty} \rv_{\mathfrak g}(\tau)$.

\end{proof}

Given a shrinking Ricci soliton $(M,g(t))_{t\in (-\infty,0)}$, Lemma \ref{density_soliton} shows that the limit
\begin{eqnarray}
\lim_{\tau \rightarrow\infty} \rv_{\mathfrak g_x}(\tau) \label{rvlimit}
\end{eqnarray}
does not depend on  $x\in M$. This naturally leads to the following definition.

\begin{definition}
The asymptotic reduced volume from the singular time of a shrinking Ricci soliton $(M,g(t))_{t\in (-\infty,0)}$ is defined as
\begin{eqnarray}
\mathcal{ARV}(M,g):=\lim_{\tau \rightarrow\infty} \rv_{\mathfrak g}(\tau).
\end{eqnarray}
\end{definition}

\begin{remark}
The asymptotic reduced volume in the setting of ancient smooth (super)solutions to the Ricci flow was first studied by Yokota in \cite{Yokota}. However, the arguments in \cite{Yokota} do not seem to carry over to the setting of singular Type I flows, to show the invariance of the limit (\ref{rvlimit}) on the basepoint. For our work though, it suffices to establish this for shrinking Ricci solitons. 
\end{remark} 

\section{Splitting Ricci shrinkers.}\label{spl}

\begin{definition}
Let $(M, g(-1),f)$ be a gradient shrinking Ricci soliton, $g(t)$ the associated Ricci flow, and $p\in M$ a minimizer of  $\Theta_g$. The subset
\begin{equation}
 S(M,g)=\{x\in M,\;\; \Theta_g(x) =  \Theta_g(p)\},
\end{equation} 
will be called the spine of the gradient shrinking Ricci solition.
\end{definition}

\begin{remark}\label{closed}
The lower semi-continuity of the density implies that $S(M,g)$ is closed.
\end{remark}

\begin{lemma}[Splitting principle]\label{cone_splitting}
 Let $g(t)=g_M(t)+g_{Eucl}$, $t\in (-\infty,0)$,  be a gradient shrinking Ricci soliton on $M^k\times \mathbb R^{n-k}$, $0< k\leq n$, satisfying $|\Rm(g(-1))|_{g(-1)}\leq C$. Moreover, let $V\subseteq M$ such that $S(M\times \mathbb R^{n-k},g)=V\times \mathbb R^{n-k}$. Suppose there exist $\tau>0$ such that
 \begin{eqnarray}
 \frac{\diam_{g_M(-\tau)} (V)}{\sqrt \tau} > A\sqrt 2 -1, \label{assumption1} 
 \end{eqnarray}
where $A>0$ is given by  Proposition \ref{red_dist_est}. Then, there exists a gradient shrinking Ricci soliton $(N^{k-1},h(t))_{t\in (-\infty,0)}$ and $V'\subseteq N$ such that $(M,g_M(t))$ splits isometrically as $(N,h(t))\times (\mathbb R, g_{Eucl})$ and $S(M\times \mathbb R^{n-k},g)=V'\times \mathbb R^{n-k+1}$.
\end{lemma}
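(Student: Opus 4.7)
The plan is to promote a well-separated pair of spine points into two normalized soliton functions whose difference is non-constant yet has vanishing Hessian; the resulting parallel gradient, confined to the $M$-factor, will furnish the new $\mathbb{R}$-splitting of $(M,g_M)$. First I fix $p_1,p_2\in V$ with $d_{g_M(-\tau)}(p_1,p_2)>(A\sqrt 2-1)\sqrt\tau$ (available by the diameter hypothesis) and identify them with $(p_1,0),(p_2,0)\in M\times\mathbb R^{n-k}$, both attaining the minimum of $\Theta_g$. Invoking Lemma \ref{density_soliton}(2) for the pointed soliton $\mathfrak g_{(p_i,0)}$, I attach to each basepoint a singular reduced distance $l_i$ that is simultaneously a normalized soliton function for $g$. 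The product structure forces the decomposition $l_i(\cdot,\tau)=l_i^M+|y|^2/(4\tau)+c_i$, with $l_i^M$ a normalized soliton function for $g_M$ having $p_i$ as a critical point.

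The key non-triviality input is a direct application of Proposition \ref{red_dist_est}. The lower bound applied to $l_1$ at $(p_2,0)$, combined with the diameter hypothesis, gives
\begin{eqnarray*}
 l_1((p_2,0),\tau)\;\geq\;\frac{1}{A}\left(1+\frac{d_{g_M(-\tau)}(p_1,p_2)}{\sqrt\tau}\right)^{\!2}-A\;>\;\frac{(A\sqrt 2)^2}{A}-A\;=\;A,
\end{eqnarray*}
while the upper bound applied to $l_2$ at its own basepoint yields $l_2((p_2,0),\tau)\leq A$, and symmetrically $l_2((p_1,0),\tau)>A\geq l_1((p_1,0),\tau)$. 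Hence $l_1(\cdot,\tau)-l_2(\cdot,\tau)$ takes strictly opposite signs at $(p_1,0)$ and $(p_2,0)$, so in particular it is non-constant. On the other hand both $l_i(\cdot,\tau)$ satisfy $\ric_{g(-\tau)}+\hess_{g(-\tau)} l_i=g(-\tau)/(2\tau)$, so subtraction yields $\hess_{g(-\tau)}(l_1-l_2)\equiv 0$. Therefore $\nabla(l_1-l_2)$ is a nonzero parallel vector field on $(M\times\mathbb R^{n-k},g(-\tau))$ which, because $l_1-l_2$ is constant on every Euclidean fiber, is tangent to the $M$ factor. Completeness of $g_M$ and the standard fact that a complete Riemannian manifold carrying a non-constant function with parallel gradient splits off a $\mathbb R$-factor then give $(M,g_M(-\tau))\cong(N^{k-1},h(-\tau))\times(\mathbb R,g_{Eucl})$, and soliton self-similarity propagates the splitting to all $t\in(-\infty,0)$.

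Finally, translations along the new $\mathbb R$-factor are isometries of $g_M(t)$ for every $t$ that commute with the soliton flow, and therefore preserve $\Theta_g$; so $V$ is invariant under them, $V=V'\times\mathbb R$ for some $V'\subseteq N$, and $S(M\times\mathbb R^{n-k},g)=V'\times\mathbb R^{n-k+1}$. The main subtlety lies in the opening step: one must attach an honest normalized soliton function to each basepoint $(p_i,0)$. For critical points of the soliton function this is precisely Lemma \ref{density_soliton}(2); if a chosen $p_i\in V$ happens not to be critical, one replaces it with the limit point $q_i:=\lim_j \phi^M_{-\tau_j}(p_i)$, which is in the spine and is a critical point, using the strictness in the diameter hypothesis to ensure $d_{g_M(-\tau)}(q_1,q_2)>(A\sqrt 2-1)\sqrt\tau$ still holds.
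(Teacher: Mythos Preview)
Your strategy matches the paper's exactly: pick two well-separated spine points, attach to each a singular reduced distance that is simultaneously a normalized soliton function, subtract, and use the reduced-distance estimates to show the difference is non-constant with vanishing Hessian. Two steps, however, need repair.

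First, the product decomposition $l_i(\cdot,\tau)=l_i^M+|y|^2/(4\tau)+c_i$ and the consequent claim that $l_1-l_2$ is constant on Euclidean fibers are not justified: a normalized soliton function on the product has the form $l_i^M(x,\tau)+|y-y_{0,i}|^2/(4\tau)$, and nothing you have cited forces $y_{0,i}=0$. You do not need this claim. Since $M\times\{0\}$ is totally geodesic in the product, the restriction $\bar L:=(l_1-l_2)\big|_{M\times\{0\}}$ satisfies $\hess_{g_M(-\tau)}\bar L=0$, and your own inequalities at $(p_1,0)$ and $(p_2,0)$ already show $\bar L$ is non-constant on $M$; this yields the splitting of $(M,g_M)$ directly. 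This is precisely the paper's route.

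Second, the detour through critical points in your final paragraph is unnecessary and the proposed fix can fail: the limits $q_i=\lim_j\phi^M_{-\tau_j}(p_i)$ may coincide even when $p_1,p_2$ are far apart (take $M=M'\times\mathbb R$ with $f_{M'}$ having a unique critical point; both trajectories contract to the same point), so ``strictness'' does not rescue the separation. The paper bypasses this entirely: for \emph{any} spine point $(p_i,0)$ one has $\Theta_g(p_i,0)=\mathcal{ARV}(M\times\mathbb R^{n-k},g)$ by Lemma~\ref{density_soliton}, hence $\rv_{\mathfrak g_{(p_i,0)}}(\tau)$ is constant in $\tau$, and Proposition~\ref{rv_props}(2) then provides a singular reduced distance for $\mathfrak g_{(p_i,0)}$ that is already a normalized soliton function --- no criticality of $(p_i,0)$ is required.
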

\begin{proof}
Since $S(M\times \mathbb R^{n-k},g)$ is closed, assumption (\ref{assumption1}) implies that there exist $x,y\in V$ satisfying
\begin{eqnarray}
\frac{d_{g_M(-\tau)}(x,y)}{\sqrt \tau} > A \sqrt 2 - 1. \label{assumption2}
\end{eqnarray}
Let $p=(x,0), q=(y,0) \in S(M\times \mathbb R^{n-k})$. By Lemma \ref{density_soliton}, $\Theta_g (p) = \Theta_g(q) = \mathcal{ARV}(M\times \mathbb R^{n-k})$. This implies that there exist singular reduced distance functions $l_p,l_q$ of $\mathfrak g_p$ and $\mathfrak g_q$ respectively, which are both soliton functions. 

It follows from the soliton equation that the difference $L(\cdot,\tau)=l_p(\cdot,\tau)-l_q(\cdot,\tau)$ satisfies $ \hess_{g(-\tau)} L(\cdot,\tau)=0$. Moreover, since the metric on $M\times \mathbb R^{n-k}$ splits,  its restriction $\bar L=L|_{M\times 0}$ satisfies $\hess_{g_M(-\tau)} \bar L(\cdot,\tau) =0$.

 We will show that  $\nabla^{g_M(-\tau)} \bar L(\cdot,\tau) \neq 0$, which will imply the splitting $M= N\times \mathbb R$ and $g_M(- \tau)=\bar h + dr^2$. For this, observe that Proposition \ref{red_dist_est} gives
\begin{eqnarray}
\bar L(x,\tau)=L(p,\tau) &\leq& 2A-\frac{1}{A} \left(1+\frac{d_{g(-\tau)}(p,q)}{\sqrt{\tau}}\right)^2, \nonumber\\
\bar L(y,\tau)=L(q,\tau) &\geq& \frac{1}{A} \left(1+\frac{d_{g(-\tau)}(p,q)}{\sqrt{\tau}}\right)^2-2A.\nonumber
\end{eqnarray}
From (\ref{assumption2}) it follows that $\bar L(x, \tau)<0<\bar L(y,\tau)$, hence $\bar L(\cdot,\tau)$ is not constant. By scaling, we may assume that $\tau=1$ and $g_M(-1)=\bar h +dr^2$.

Moreover, the restriction $f(\cdot)$ of $l_p(\cdot,1)$ on $N\times 0$ satisfies
$$\ric_{\bar h}+\hess_{\bar h} f=\frac{1}{2} \bar h,$$
hence it is a soliton function for $\bar h$. 

 It is easy to see there are $a,b\in \mathbb R$ such that $l_p((z,v),1)=f(z)+\left(\frac{|v|}{2}+a\right)^2+b$, for every $(z,v)\in N\times\mathbb R^{n-k+1}$. Hence $\nabla^{g(-1)}l_p((z,v),1)=\nabla^{\bar h}f(z) +\left(\frac{|v|}{2} + a\right)\frac{\partial}{\partial r}(v)$, where $\frac{\partial}{\partial r}$ denotes the radial vector field in $\mathbb R^{n-k+1}$. It follows that the diffeomorphisms $\phi_t$ which generate $g(t)$ are of the form $\phi_t(z,v)=(\phi_{1,t}(z),\phi_{2,t}(v))$, 
where 
\begin{eqnarray}
\frac{d}{dt} \phi_{1,t}(z)&=&-\frac{1}{t}(\nabla^{\bar h}f)(\phi_{1,t}(z)),\qquad \phi_{1,-1}(z)=z,\nonumber\\
 \frac{d}{dt} \phi_{2,t}(v)&=&-\frac{1}{t} \left(\frac{|\phi_{2,t}(v)|}{2}+a\right)\frac{\partial}{\partial r}, \qquad \phi_{2,-1}(v)=v.\nonumber
\end{eqnarray}
Therefore,  putting $h(t)=-t(\phi_{1,t})^* \bar h$ for the Ricci flow generated by $(N,\bar h, f)$ we obtain
\begin{eqnarray}
g(t)&=&-t\phi^*_t g(-1),\nonumber\\
&=&-t\phi^*_{1,t} h(-1) -t\phi^*_{2,t} dr^2,\nonumber\\
&=& h(t)+dr^2.\nonumber
\end{eqnarray}
Hence,  the flow $(M,g(t))_{t\in(-\infty,0)}$ splits for all time.
\end{proof}

\begin{theorem}\label{splitting}
 Let $(M^n,g(t))_{t\in (-\infty,0)}\in \rf$ be a non-flat gradient shrinking Ricci soliton. Then, there exists an integer $2\leq k\leq n$, a gradient shrinking Ricci soliton $(N^k, h(t))_{t\in (-\infty,0)}$ and $D=A\sqrt 2-1>0$ ($A$ is as in Proposition \ref{red_dist_est}) such that
\begin{enumerate}
 \item $(M,g(t))$ splits isometrically as $(N^k,h(t))\times (\mathbb R^{n-k}, g_{Eucl})$.
 \item There is a $V\subseteq N$ such that $S(M,g)= V\times \mathbb R^{n-k}$ and $\diam_{h(-\tau)} (V) \leq D \sqrt \tau$.
\end{enumerate}
\end{theorem}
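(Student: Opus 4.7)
The plan is to iteratively apply the splitting principle (Lemma \ref{cone_splitting}), peeling off Euclidean factors until the residual spine becomes small. As a base case, set $M_0:=M$, $g_0:=g$, and $V_0:=S(M,g)\subseteq M_0$, with no Euclidean factor present. Proceeding inductively, at the $j$-th step we shall have an isometric product decomposition $(M,g(t))=(M_j^{n-j},g_j(t))\times(\mathbb R^j,g_{Eucl})$, where $(M_j,g_j(t))$ is a gradient shrinking Ricci soliton with bounded curvature and $S(M,g)=V_j\times\mathbb R^j$ for some $V_j\subseteq M_j$.

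At the $j$-th step we distinguish two cases. If $\diam_{g_j(-\tau)}(V_j)\leq D\sqrt{\tau}$ for every $\tau>0$, we terminate and set $k:=n-j$, $(N,h):=(M_j,g_j)$, and $V:=V_j$; the conclusion of the theorem then follows directly. Otherwise there exists some $\tau>0$ at which $\diam_{g_j(-\tau)}(V_j)/\sqrt{\tau}>D=A\sqrt{2}-1$, so Lemma \ref{cone_splitting} applied to $(M_j\times\mathbb R^j,g_j+g_{Eucl})$ produces a further isometric splitting $M_j=M_{j+1}\times\mathbb R$ together with $V_{j+1}\subseteq M_{j+1}$ satisfying $S(M,g)=V_{j+1}\times\mathbb R^{j+1}$. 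This allows us to advance to step $j+1$.

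Since each step strictly decreases the dimension of the non-Euclidean factor, the iteration must terminate in at most $n$ steps. It remains to verify that termination occurs at some $j\leq n-2$, so that $k=n-j\geq 2$. Suppose for contradiction that we are forced to reach $j=n-1$, so that $(M,g(t))$ splits as $(M_{n-1}^1,g_{n-1}(t))\times(\mathbb R^{n-1},g_{Eucl})$. A $1$-dimensional Riemannian manifold is automatically Ricci-flat, and combined with the shrinking soliton equation this forces $(M_{n-1},g_{n-1}(-1))$ to be isometric to flat $\mathbb R$; hence $(M,g(-1))$ would be flat, contradicting the hypothesis. The case $j=n$ is excluded by the same argument.

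The substantive content is Lemma \ref{cone_splitting}, which supplies at each step both the isometric splitting and the preservation of the product form of the spine; once this is granted, the rest is a finite downward induction on the dimension of the non-Euclidean factor. I expect the main subtlety to be book-keeping: at each step the set $V_j$ must be correctly identified inside $M_j$, using that the density function $\Theta_g$ is invariant under the isometric $\mathbb R$-action along the newly split factor, which is exactly the ingredient needed to rewrite $V_j=V_{j+1}\times\mathbb R$. The hypothesis of non-flatness enters only in the last step, to preclude termination at $k\leq 1$.
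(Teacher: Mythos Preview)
Your proof is correct and takes essentially the same approach as the paper: both rely on Lemma \ref{cone_splitting} to peel off Euclidean factors until the residual spine has diameter at most $D\sqrt{\tau}$, with non-flatness ruling out $k\leq 1$. The paper phrases this more compactly by taking $k$ to be the \emph{minimal} dimension for which $(M,g)$ splits as a non-flat soliton times a Euclidean factor and then arguing by contradiction, but this is logically equivalent to your explicit downward induction.
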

\begin{proof}
Let $0\leq k\leq n$ be the minimal $k$ with the property that $(M^n,g(t))=(N^k,h(t))\times (\mathbb R^{n-k},g_{Eucl})$, for some non-flat gradient shrinking Ricci soliton $(N,h(t))$. Since $(M,g(t))$ is not flat, $k\geq 2$. Moreover, by the translational symmetry there exists a $V\subseteq N$ such that $S(M,g)=V\times \mathbb R^{n-k}$.

All we need to show is that $\diam_{h(-\tau)}(V)\leq D\sqrt \tau$. If this is violated for some $\tau>0$,  Lemma \ref{cone_splitting} implies that $(N^k,h(t))$ splits a line, thus contradicting the minimality of $k$.
\end{proof}

\section{The size of the singular strata.}\label{sizeof}
\subsection{Density uniqueness of tangent flows.}
Let $\mathfrak g=(M,g(t),p)_{t\in [-T,0)}\in \rf$. Given an arbitrary sequence $\tau_i\searrow 0$ consider the blow up sequence $\mathfrak g_i=(M,\tau_i^{-1}g(\tau_i t),p)$ converging to a tangent flow $\mathfrak h=(N,h(t), q) \in \rf$. As was described in the introduction, the combined work of  \cite{Naber}, \cite{EMT} and \cite{Mante_Muller}  shows that $\mathfrak h$ is in fact the Ricci flow induced by a gradient shrinking Ricci soliton $(N,h(-1), f)$ and is non-flat, provided that $p$ belongs to the singular set $\Sigma$ of $(M,g(t))_{[-T,0)}$ (see Definition \ref{singular_set}).

A tangent flow of $\mathfrak g$ may depend on the choice of the sequence $\tau_i$, and is not unique in general. However, the following theorem asserts that all tangent flows of $\mathfrak g$ should have the same asymptotic reduced volume.

\begin{theorem}\label{ARV_uniqueness}
Let $\mathfrak g=(M,g(t),p)_{t\in (-T,0)}\in \rf$ and $\mathfrak h=(N,h(t),q)_{t\in (-\infty,0)}$ be a tangent flow of $\mathfrak g$. Then
\begin{eqnarray}
\Theta_g(p)=\Theta_h(q)=\mathcal{ARV}(N,h).
\end{eqnarray}
\end{theorem}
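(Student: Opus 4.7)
The plan is to combine the easy lower-semicontinuity bounds with a squeeze argument that exploits the scale-invariance of $\rv$ together with the monotonicity-rigidity statement of Lemma \ref{red_vol_props}.

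The easy half is that applying Proposition \ref{rv_props}(3) to the blow-up sequence $\mathfrak g_i = (M, \tau_i^{-1} g(\tau_i t), p) \to \mathfrak h$, together with the scale invariance $\Theta_{\mathfrak g_i} = \Theta_g(p)$, yields $\Theta_g(p) \geq \Theta_h(q)$, while Lemma \ref{density_soliton}(2) gives $\Theta_h(q) \geq \mathcal{ARV}(N,h)$. It therefore suffices to prove the reverse inequality $\Theta_g(p) \leq \mathcal{ARV}(N,h)$. For this I would pick $l_i$ a singular reduced distance on $\mathfrak g_i$ realizing the infimum $\rv_{\mathfrak g_i}(1)$, and extract by Remark \ref{rmk_s_r_d} a subsequential $C^{0,\alpha}_{loc}$ limit $l_\infty$, itself a singular reduced distance for $\mathfrak h$. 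Scale-invariance gives $\rv_{\mathfrak g_i, l_i}(1) = \rv_{\mathfrak g}(\tau_i) \to \Theta_g(p)$, and for every $\tau \geq 1$ monotonicity of $\rv_{\mathfrak g_i, l_i}(\cdot)$ combines with the sandwich
\[
\rv_{\mathfrak g}(\tau_i \tau) = \rv_{\mathfrak g_i}(\tau) \leq \rv_{\mathfrak g_i, l_i}(\tau) \leq \rv_{\mathfrak g_i, l_i}(1) = \rv_{\mathfrak g}(\tau_i)
\]
to force $\rv_{\mathfrak g_i, l_i}(\tau) \to \Theta_g(p)$. Provided the defining integrals pass to the limit, one concludes $\rv_{\mathfrak h, l_\infty}(\tau) = \Theta_g(p)$ for every $\tau \geq 1$.

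In particular $\rv_{\mathfrak h, l_\infty}$ is constant on $[1, \infty)$, so by Lemma \ref{red_vol_props} the function $l_\infty(\cdot, 1)$ is a normalized soliton function on $\mathfrak h$. Lemma \ref{density_soliton}(1) then identifies the common value,
\[
\Theta_g(p) = \lim_{\tau \to \infty} \rv_{\mathfrak h, l_\infty}(\tau) = \mathcal{ARV}(N,h),
\]
closing the chain of inequalities.

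The technical obstacle I expect is upgrading the Fatou-type bound $\liminf_i \rv_{\mathfrak g_i, l_i}(\tau) \geq \rv_{\mathfrak h, l_\infty}(\tau)$ (valid on compact pieces) to the full convergence of the integrals used above: the tails at infinity must be controlled uniformly in $i$. This should come from the Gaussian decay of $e^{-l_i}$ built into the quadratic lower bound of Proposition \ref{red_dist_est}, whose constant $A$ is uniform in $i$ by invariance of the Type I constant $C$ under parabolic rescaling, combined with the volume growth estimates available under the Type I bound.
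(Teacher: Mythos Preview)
Your argument is correct and follows essentially the same blueprint as the paper: blow up, pass a family of singular reduced distances to a limit $l_\infty$ on $\mathfrak h$, observe that $\rv_{\mathfrak h,l_\infty}$ is constant, and close the chain of inequalities using Lemma~\ref{density_soliton}. The one genuine variant is your choice of $l_i$: you take the scale-by-scale minimizers of $\rv_{\mathfrak g_i}(1)$ and then need the sandwich $\rv_{\mathfrak g}(\tau_i\tau)\leq \rv_{\mathfrak g_i,l_i}(\tau)\leq \rv_{\mathfrak g}(\tau_i)$ to force convergence, whereas the paper fixes a single singular reduced distance $l$ on $\mathfrak g$ and sets $l_i(\cdot,\tau)=l(\cdot,\tau_i\tau)$, so that $\rv_{\mathfrak g_i,l_i}(\tau)=\rv_{\mathfrak g,l}(\tau_i\tau)\to\lim_{\tau\searrow 0}\rv_{\mathfrak g,l}(\tau)$ holds for \emph{every} $\tau>0$ directly from monotonicity. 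The paper then invokes Lemma~\ref{density_soliton}(3) to identify the constant with $\Theta_h(q)$; your route avoids part~(3) but has to restrict to $\tau\geq 1$. Note also that once you have $\rv_{\mathfrak h,l_\infty}(\tau)=\Theta_g(p)$ for all $\tau\geq 1$, the appeal to Lemma~\ref{red_vol_props} is unnecessary: Lemma~\ref{density_soliton}(1) already gives $\lim_{\tau\to\infty}\rv_{\mathfrak h,l_\infty}(\tau)=\mathcal{ARV}(N,h)$ for \emph{any} singular reduced distance, so taking $\tau\to\infty$ yields $\Theta_g(p)=\mathcal{ARV}(N,h)$ immediately. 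Your handling of the tail integrals via Proposition~\ref{red_dist_est} is exactly what the paper relies on as well.
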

\begin{proof}
The proof is similar to that of Lemma \ref{density_soliton}. Fix some singular reduced distance $l$ for $\mathfrak g$ and consider a sequence $\tau_i \searrow 0$ such that the blow up sequence $\mathfrak g_i=(M,\tau_i^{-1}g(\tau_i \tau),p)$ converges to $\mathfrak h=(N,h(t),q)_{t\in (-\infty,0)}$. Set $l_i(\cdot,\tau)=l(\cdot,\tau_i \tau)$ for the corresponding singular reduced distance.

By Proposition \ref{red_dist_est} we obtain that along a subsequence $l_i$ converge to some singular reduced distance $\bar l$ for $\mathfrak h$. Moreover,
\begin{eqnarray}
\rv_{\mathfrak h,\bar l}(\bar\tau)=\lim_{\tau \searrow 0} \rv_{\mathfrak g,l}(\tau),
\end{eqnarray}

By Part 3 of  Lemma \ref{density_soliton} we also obtain that $\Theta_h(q)=\rv_{\mathfrak h,\bar l}(\tau) $. Hence, since  $\Theta_g(p)=\liminf_i \Theta_{g_i}(p) \geq \Theta_h(q)$ we obtain
\begin{eqnarray}
\Theta_g(p)\geq \Theta_h(q)=\rv_{\mathfrak h,\bar l}(\tau)=\lim_{\tau\searrow 0}\rv_{\mathfrak g,l}(\tau)\geq \lim_{\tau \searrow 0 }\rv_{\mathfrak g}(\tau)= \Theta_g(p),
\end{eqnarray}
for every $\bar\tau>0$, which proves the theorem.
\end{proof}

\begin{remark}
 Compare Theorem \ref{ARV_uniqueness} with the entropy uniqueness of tangent flows observed by Mantegazza and M\"{u}ller in \cite{Mante_Muller}. The $\mathcal W$-entropy of a gradient shrinking Ricci soliton $(N,h,f)$ is defined as
\begin{eqnarray}
 \mathcal W=\int_N (R_h-|\nabla f|^2+f-n)\frac{e^{-f}}{(4\pi)^{\frac{n}{2}}} d\mu_{h},
\end{eqnarray}
where $R_h$ denotes the scalar curvature of $h$ and  $f$ is normalized so that $\int_N \frac{e^{-f}}{(4\pi)^{\frac{n}{2}}}d\mu_{h}=1$.
\end{remark}

\subsection{Estimating the size of the strata.}
 Now we are ready to prove Theorem \ref{main_theorem} and Corollary \ref{corol}. Fix a singular compact Type I Ricci flow $(M,g(t))_{t\in [-T,0)}$, and let $C>0$ be such that 
\begin{eqnarray}
\sup_M|\Rm(g(t))|_{g(t)}\leq \frac{C}{|t|},
\end{eqnarray}
for $t\in[-T,0)$. Given any $r>0$, set $g_r(t)=r^{-2}g(r^2 t)$. 

\begin{definition}
 For every $x\in \Sigma$, $r>0$ and $\delta>0$ define
\begin{eqnarray}
 S^{x,r,\delta}=\{y\in B_{g_r}(x,-1,4D),\;\; \Theta_{g_r}(y)< \Theta_g(x)+\delta \}. \nonumber
\end{eqnarray}
\end{definition}

 The sets $S^{x,r,\delta}$ are important because of the following lemma.

\begin{lemma}(Line-up lemma)\label{approx}
 For every $\epsilon,\alpha>0$ and $x\in \Sigma_j$, $j=0,...,n-2$, there is a $\delta=\delta(x,j,\epsilon, \alpha)>0$ such that for every $r\in (0,\delta)$  there exists a non-flat shrinking Ricci soliton  $(X, z(t), m)_{t\in (-\infty,0)}\in \rf$ with the following properties.
 \begin{enumerate}
 \item $(X,z(t))$ splits isometrically as $(N^{n-k}, h(t))\times (\mathbb R^k, g_{Eucl})$, for some $k\leq j$. 
 \item $m\in S(X,z)=V\times \mathbb R^k$, where $V\subseteq N$ satisfies $\diam_{h(-\tau)}V\leq D \sqrt \tau$, for all $\tau>0$. Here $D=D(n,C)$  is given by  Theorem \ref{splitting}.
 \item There is a diffeomorphism $F:B_{z}(m,-1, 5D)\rightarrow M,$ with $F(m)=x$, such that
\begin{eqnarray}
 F^{-1}(S^{x,r,\delta})\subseteq \mathcal N^{z(t)}_\epsilon(V\times \mathbb R^k),&& \textrm{and}  \nonumber\\
 (1.001)^{-2}z(t)\leq F^* g_r(t)\leq 1.001^2 z(t),&& \nonumber
\end{eqnarray}
for every $t\in[-1,-\alpha]$. Here,  $\mathcal N^{z(t)}_\epsilon(\:\cdot\:)$ denotes the $\epsilon$-neighbourhood with respect to $z(t)$.
\end{enumerate}
\end{lemma}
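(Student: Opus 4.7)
The strategy is a contradiction argument: if the lemma fails, there exist $\epsilon,\alpha>0$, $x\in\Sigma_j$ and sequences $\delta_i\searrow 0$, $r_i\in(0,\delta_i)$ such that no triple $(X,z,m)$ satisfying (1)--(3) exists at scale $r=r_i$, $\delta=\delta_i$. I will construct such a triple explicitly as a tangent flow of $\mathfrak g$ at $x$, and force a contradiction via scale-invariance of the density, its lower semicontinuity, and the identity $\Theta_g(x)=\mathcal{ARV}(X,z)$ supplied by Theorem \ref{ARV_uniqueness}.

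\textbf{Construction of the limiting soliton.} The blow-up sequence $\mathfrak g_i:=(M,g_{r_i}(t),x)$ lies in $\rf$, so by the compactness property of Remark \ref{rmk_s_r_d} it subconverges in the pointed Cheeger-Gromov-Hamilton topology to some $(X,z(t),m)_{t\in(-\infty,0)}\in\rf$; by \cite{Naber} this is the flow of a gradient shrinking Ricci soliton, and by \cite{EMT} it is non-flat since $x\in\Sigma$. Theorem \ref{splitting} then supplies the isometric splitting $(X,z(t))=(N^{n-k},h(t))\times(\mathbb R^{k},g_{Eucl})$ with $S(X,z)=V\times\mathbb R^{k}$, $V\subseteq N$, and $\diam_{h(-\tau)}V\leq D\sqrt\tau$. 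If $k\geq j+1$, regrouping one $\mathbb R$ factor into $N$ would write $X$ as $(N\times\mathbb R^{k-j-1})\times\mathbb R^{j+1}$, exhibiting a tangent flow at $x$ that splits $j+1$ Euclidean factors and contradicting $x\in\Sigma_j$; hence $k\leq j$, verifying (1). Theorem \ref{ARV_uniqueness} then gives $\Theta_z(m)=\Theta_g(x)=\mathcal{ARV}(X,z)$, and Lemma \ref{density_soliton} forces $m\in S(X,z)=V\times\mathbb R^{k}$, so (2) holds as well.

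\textbf{The diffeomorphism and the density argument.} By the definition of pointed Cheeger-Gromov-Hamilton convergence, for all sufficiently large $i$ there is a diffeomorphism $F_i:B_z(m,-1,5D)\to M$ with $F_i(m)=x$ and $F_i^* g_{r_i}(t)\to z(t)$ smoothly on $B_z(m,-1,5D)\times[-1,-\alpha]$; the metric comparison in (3) is then automatic for $i$ large. If, along a subsequence, the inclusion $F_i^{-1}(S^{x,r_i,\delta_i})\subseteq\mathcal N^{z(t)}_\epsilon(V\times\mathbb R^{k})$ failed, we could pick $y_i\in S^{x,r_i,\delta_i}$ and $t_i\in[-1,-\alpha]$ with $d_{z(t_i)}(F_i^{-1}(y_i),V\times\mathbb R^{k})\geq\epsilon$; after passing to a further subsequence, $t_i\to t_\infty\in[-1,-\alpha]$ and $F_i^{-1}(y_i)\to y_\infty$ with $d_{z(t_\infty)}(y_\infty,V\times\mathbb R^{k})\geq\epsilon$. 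Since $\Theta_{g_r}\equiv\Theta_g$ by scale invariance, $y_i\in S^{x,r_i,\delta_i}$ yields $\Theta_g(y_i)<\Theta_g(x)+\delta_i$; and the pointed flows $(M,g_{r_i}(t),y_i)$ Cheeger-Gromov converge, via $F_i$, to $(X,z(t),y_\infty)$. Lower semicontinuity of the density then gives
\begin{eqnarray*}
\Theta_z(y_\infty)\leq\liminf_i\Theta_{g_{r_i}}(y_i)\leq\Theta_g(x)=\mathcal{ARV}(X,z),
\end{eqnarray*}
while Lemma \ref{density_soliton} gives the reverse inequality $\Theta_z(y_\infty)\geq\mathcal{ARV}(X,z)$. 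Equality forces $y_\infty\in S(X,z)=V\times\mathbb R^{k}$, contradicting the distance bound.

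\textbf{Main obstacle.} The analytic core is the sandwich $\Theta_z(y_\infty)=\mathcal{ARV}(X,z)$, obtained by combining (a) Theorem \ref{ARV_uniqueness}, which pins the density of the tangent flow at its basepoint to $\mathcal{ARV}$, (b) Lemma \ref{density_soliton}, which identifies $\mathcal{ARV}$ with the minimum of $\Theta_z$ on the soliton, and (c) lower semicontinuity of $\Theta$ under pointed Cheeger-Gromov-Hamilton convergence with shifted basepoints $y_i\to y_\infty$. A secondary technical point is the regrouping argument that extracts $k\leq j$ from the definition of $\Sigma_j$.
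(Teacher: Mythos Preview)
Your proof is correct and takes essentially the same approach as the paper: a contradiction via blow-up to a tangent soliton, followed by the density sandwich $\Theta_z(y_\infty)\leq\Theta_g(x)=\mathcal{ARV}(X,z)\leq\Theta_z(y_\infty)$ forcing $y_\infty\in S(X,z)$. Your version is in fact more explicit than the paper's in verifying (1) and (2) (the bound $k\leq j$ from the definition of $\Sigma_j$, and $m\in S(X,z)$ via Theorem~\ref{ARV_uniqueness} and Lemma~\ref{density_soliton}); one minor quibble is that the subconvergence of $\mathfrak g_i$ comes from Hamilton's compactness and Perelman's non-collapsing rather than from Remark~\ref{rmk_s_r_d}, which concerns singular reduced distances.
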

\begin{proof}
Fix $x\in \Sigma_j$ and $\epsilon,\alpha>0$. Arguing by contradiction and passing to a subsequence if necessary, we obtain sequences $0< r_i<\delta_i$ such that $\delta_i,r_i\searrow 0$ such that:
\begin{enumerate}
\item[(i)] There is a non-flat shrinking Ricci soliton $(X, z(t), m)_{t\in (-\infty,0)}\in \rf$ satisfying (1), (2) and  $(M,g_{r_i}(t),x)\rightarrow (X, z(t), m)$. Moreover, there are diffeomorphisms $F_i:B_{z}(m,-1, 5D)\rightarrow M,$ with $F_i(m)=x$ and 
\begin{eqnarray}
(1.001)^{-2}z(t)\leq F_i^* g_{r_i}(t)\leq 1.001^2z(t),\nonumber
\end{eqnarray}
for every $t\in [-1,-\alpha]$.
\item[(ii)] There are  sequences $t_i\in [-1,\alpha]$  and $y_i\in B_{g_{r_i}}(x,-1,4D)$ such that $t_i\rightarrow \bar t$, $F_i^{-1}(y_i)\rightarrow y_\infty$ and $\Theta_{g_{r_i}}(y_i)<\Theta_g(x)+\delta_i$, but $F^{-1}(y_i) \notin \mathcal N^{z(t_i)}_\epsilon (V\times \mathbb R^k )$.
\end{enumerate}
It follows that $y_\infty \notin \mathcal N_\epsilon^{z(\bar t)}(S(X,z))$. However, the lower semicontinuity of the density under Cheeger-Gromov convergence and Theorem \ref{ARV_uniqueness} imply that 
\begin{eqnarray}
\Theta_z(y_\infty)\leq \liminf_i \Theta_{g_{r_i}} (y_i)\leq\Theta_g(x)=\Theta_z(m).
\end{eqnarray}
Hence $y_\infty \in S(X,z)$, which is a contradiction.
\end{proof}

\begin{lemma}(Covering lemma)\label{covering_lemma}
Let $(X,z(t),m)_{t\in (-\infty,0)}\in \rf$ be a non-flat, shrinking Ricci soliton satisfying properties (1) and (2) of Lemma \ref{approx} and $s=j+\varepsilon$, $j=0,...,n-2$, $\varepsilon>0$. Then, there is $\sigma_s\in (0,\frac{1}{2})$ such that for every $\sigma\in (0,\sigma_s]$, $\rho \in (0,4D)$, $\alpha\in (0,\left(\frac{\sigma\rho}{2D}\right)^2 )$ and $x\in \mathbb R^k$ we obtain the covering
$$\mathcal N^{z(-\alpha)}_{\frac{\sigma\rho}{4}}(V\times B_\rho(x))\subseteq\bigcup_{l=1}^{P(\sigma)} B_{z}((q,x_l),-\alpha,\frac{\sigma \rho}{1.001^2}),$$
for some $x_l\in B_\rho(x)\subseteq \mathbb R^k$, $l=1,\ldots, P(\sigma)$, and  $q\in V$. Moreover,  $P(\sigma)$ satisfies
\begin{eqnarray}
P(\sigma) \sigma^j &\leq& C(n) ,\label{cov1}\\
P(\sigma)\sigma^s &\leq& \frac{1}{2}. \label{cov2} 
\end{eqnarray}

\end{lemma}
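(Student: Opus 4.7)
The heart of the argument is to exploit the diameter bound on $V$: under the hypothesis $\alpha<(\sigma\rho/(2D))^2$, property (2) forces
\[
\diam_{h(-\alpha)} V \;\leq\; D\sqrt{\alpha}\;<\;\sigma\rho/2,
\]
so the entire $N$--factor $V$ collapses into one ball of radius $\sigma\rho/2$ at time $-\alpha$ and can be absorbed into a single base point. After this, only a standard Euclidean covering in the $\mathbb R^k$--factor remains to be done.

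First I would fix any $q\in V$ once and for all. For any $(y,v)\in\mathcal N^{z(-\alpha)}_{\sigma\rho/4}(V\times B_\rho(x))$, since $z(-\alpha)=h(-\alpha)+g_{Eucl}$ is a Riemannian product,
\[
d_{h(-\alpha)}(y,V)^{2}+d_{Eucl}(v,B_\rho(x))^{2}\;<\;(\sigma\rho/4)^{2}.
\]
In particular $d_{h(-\alpha)}(y,V)<\sigma\rho/4$ and, combined with the diameter bound above, $d_{h(-\alpha)}(y,q)<3\sigma\rho/4$; also there is $x'\in B_\rho(x)$ with $d_{Eucl}(v,x')<\sigma\rho/4$.

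Next I would pick a constant $c>0$ satisfying $9/16+(1/4+c)^{2}<1/1.001^{4}$; an elementary check shows $c=2/5$ works, since $9/16+(13/20)^{2}=0.985<0.996<1/1.001^{4}$. By a standard volume--packing argument in $\mathbb R^k$ one can cover $B_\rho(x)$ by Euclidean balls of radius $c\sigma\rho$ centered at points $x_{l}\in B_\rho(x)$, $l=1,\ldots,P(\sigma)$, with
\[
P(\sigma)\;\leq\; C(k)/\sigma^{k}.
\]
For my chosen $v$, select $x_{l}$ with $d_{Eucl}(x',x_{l})<c\sigma\rho$; then $d_{Eucl}(v,x_{l})<(1/4+c)\sigma\rho$ and
\[
d_{z(-\alpha)}\!\left((y,v),(q,x_{l})\right)^{2}\;<\;\bigl(9/16+(1/4+c)^{2}\bigr)\sigma^{2}\rho^{2}\;<\;\left(\sigma\rho/1.001^{2}\right)^{2},
\]
which gives the required covering.

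Finally, for the counting estimates, use $k\leq j$ and $\sigma<1$: then
\[
P(\sigma)\sigma^{j}\;\leq\; C(k)\sigma^{j-k}\;\leq\; C(n),
\]
proving (\ref{cov1}); and $P(\sigma)\sigma^{s}=P(\sigma)\sigma^{j+\varepsilon}\leq C(n)\sigma^{\varepsilon}$, which is $\leq 1/2$ as soon as $\sigma\leq\sigma_{s}:=\min\{1/2,\,(2C(n))^{-1/\varepsilon}\}$, proving (\ref{cov2}). The only ``obstacle'' is the numerical tuning of $c$ to ensure that the product distance stays inside the $\sigma\rho/1.001^{2}$ budget; no conceptual difficulty arises beyond the product--structure bookkeeping.
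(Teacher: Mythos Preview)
Your proof is correct and follows essentially the same approach as the paper: exploit the product structure $z=h+g_{Eucl}$, use the diameter bound $\diam_{h(-\alpha)}V\leq D\sqrt\alpha<\sigma\rho/2$ to collapse the $V$--factor to a single point $q$, and reduce to a standard Euclidean covering in $\mathbb R^k$. The only differences are cosmetic: the paper covers $B_\rho(x)$ by balls of radius $\sigma\rho/4$ and then uses the triangle inequality $d_{z(-\alpha)}(y,(q,x_l))\leq d_{z(-\alpha)}(y,y')+\sqrt{(\diam_{h(-\alpha)}V)^2+(\sigma\rho/4)^2}\leq \frac{\sigma\rho}{4}(1+\sqrt5)$, whereas you control each factor separately via Pythagoras and choose radius $c\sigma\rho$ with $c=2/5$; both numerical routes land comfortably inside the $\sigma\rho/1.001^2$ budget.
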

\begin{proof}
There is a $C(n)>0$ such that,  for every $\sigma>0$ and  $k\leq j$, we can cover the unit ball $\overline{B_1(0)}\subseteq \mathbb R^k$ with $P(\sigma)$ balls of radius $\frac{\sigma}{4}$ such that
\begin{eqnarray}
 P(\sigma) \sigma^j &\leq& C(n) .
 \end{eqnarray}
 Moreover, we can chose $\sigma >0$ small enough in order to satisfy 
 \begin{eqnarray}
 P(\sigma)\sigma^s &\leq& \frac{1}{2}. 
\end{eqnarray}
Hence,  we can cover any $B_\rho (x)\subseteq \mathbb R^k$ by $P(\sigma)$ balls of radius $\frac{\sigma\rho}{4}$, 
\begin{eqnarray}
 B_\rho(x) \subseteq \bigcup_{l=1}^{P(\sigma)} B_{\frac{\sigma\rho}{4}}(x_l), \label{covering}
\end{eqnarray}
for some $x_l\in B_\rho(x)$, $l=1,\ldots,P(\sigma)$, so that (\ref{cov1}) and (\ref{cov2}) hold.

Now, suppose that $m=(q,0)\in V\times \mathbb R^k$, let $y=(p,\bar x) \in \mathcal N^{z(-\alpha)}_{\frac{\sigma\rho}{4}}(V\times B_\rho (x))$ and choose $y'=(q',x')\in V\times B_\rho (x)$ such that $d_{z(-\alpha)}(y,y')<\frac{\sigma \rho}{4}$. Then, from (\ref{covering}) there is $x_{l_0}$ such that $x'\in B_{\frac{\sigma\rho}{4}}(x_{l_0})$.

We then compute, using Theorem \ref{splitting},
\begin{eqnarray}
 d_{z(t)}(y,(q,x_{l_0}))&\leq& d_{z(t)}(y,y') + \sqrt{  (\diam_{h(t)}V)^2 + \left(\frac{\sigma\rho}{4}\right)^2 },\\
&\leq& d_{z(t)}(y,y') + \sqrt{  -t D^2 + \left(\frac{\sigma\rho}{4}\right)^2 }.
\end{eqnarray}

Putting  $-t=\alpha\leq \left(\frac{\sigma\rho}{2D}\right)^2$ we obtain
\begin{eqnarray}
 d_{z(-\alpha)}(y,(q,x_{l_0}))\leq \frac{\sigma\rho}{4}(1+\sqrt 5)<\frac{\sigma\rho}{1.001^2},
\end{eqnarray}
which suffices to prove the lemma. 
\end{proof}

\begin{proof}[Proof of Theorem  \ref{main_theorem}]
Fix $j$ and $s=j+\varepsilon$, as in the statement of the theorem. Let $\sigma:=\sigma_s\in (0,\frac{1}{2})$ and $D:=D(n,C)>0$ be the constants defined in Theorems \ref{covering_lemma} and \ref{splitting} respectively, and set $\alpha:=\sigma^2$, $\epsilon_s=\frac{2D 1.001\sigma}{4}$.
 
  Define, for each $i,m\geq 1$,
\begin{eqnarray}
D_{i}=\left\{ x\in \Sigma_j, \delta(x,j,\epsilon_s,\alpha)\geq \frac{1}{i}\right\}
\end{eqnarray}
and
\begin{eqnarray}
 S^{i,m}=\left\{x \in D_i, \;\; \Theta(x)\in \left[\frac{m-1}{i}, \frac{m}{i}\right)\right\}.
\end{eqnarray}
Since $\Theta_g(\;\cdot\;)\in [0,1]$, by Remark \ref{density_values}, it follows that $D_i = \bigcup_{m=1}^i S^{i,m}$.

In the following, we fix $i\geq 1$.  We will show that there exists $L_i>0$  such that for every $q\geq 0$ and $\bar\tau \in [ \alpha i^{-2},i^{-2}]$ 
\begin{eqnarray}
D_i \subseteq \bigcup_{l=1}^{Q_q} \overline{B_g (x_{q,l}, -\alpha^q \bar\tau, 2D\sqrt{\alpha^q \bar\tau})}, \label{kalyma}
\end{eqnarray}
and $Q_q (2D \sqrt{\alpha^q \bar\tau})^s \leq 2^{-q} L_i$. 

Assuming for now this is true, for each $\tau \in (0,1]$ define the closed sets
\begin{eqnarray}
C_i (\tau)&=&\left\{
\begin{array}{ll}
\bigcup_{l=1}^{Q_q} \overline{B_g (x_l, -\alpha^q \bar\tau, 2D\sqrt{\alpha^q \bar\tau})}, & \tau=\alpha^q \bar \tau , \;\bar\tau \in [\alpha i^{-2} , i^{-2}],\; q\geq 0,\\
& \\
C_i(i^{-2}),  & \tau \geq i^{-2},
\end{array}
\right.\nonumber\\
C_i &=&\bigcap_{\tau>0} C_i(\tau), \nonumber
\end{eqnarray}

Using the property of the cover (\ref{kalyma}) and the non-inflating property of the Ricci flow we compute
\begin{eqnarray}
\vol_{g(-\tau)} (C_i) &\leq& \vol_{g(-\tau)} (C_i (\tau)) \\
&=& \vol_{g(-\alpha^q \bar\tau)} (C_i (\alpha^q \bar \tau) )\\
&\leq& Q_q \kappa_0 (2D \sqrt{\alpha^q \bar \tau})^{n-s} (2D \sqrt{\alpha^q \bar \tau})^s \label{vol_cover}\\
&\leq& L_i\kappa_0 2^{-q} (2D\sqrt{\alpha^q\bar\tau})^{n-s}.
\end{eqnarray}

Since we can uniquely write $\tau=\alpha^q \bar\tau$, for some $\bar\tau\in [\alpha i^{-2},i^{-2}]$, it follows that
\begin{eqnarray}
 \frac{\vol_{g(-\tau)} (C_i) }{\tau^{\frac{n-s}{2}}} \leq L_i \kappa_0 \left( \frac{\sigma}{i}\right)^{2\beta} \tau^{\beta},
\end{eqnarray}
where $\beta=-\frac{1}{2\log_2\sigma}$. The theorem is then proven setting $A_i=C_i \cap \bar \Sigma_j$. Observe also that $\Sigma_j\subset \cup_{i=1}^\infty A_i$, since $\Sigma_j = \cup_{i=1}^\infty D_i$. 

Moreover, since along Ricci flow the estimate $R \geq -\frac{n}{2(t+T)}$ is valid for all $t\in [-T,0)$, it follows that there is a $C=C(n,T)>0$ such that $\vol_{g(t)}(\bar\Sigma_j \setminus A_i)\leq C \vol_{g(-T)}(\Sigma_j\setminus A_i)$ for every $i$. Choosing $i$ large enough so that $C \vol_{g(-T)}(\bar\Sigma_j\setminus A_i)<\delta$ suffices to prove (\ref{small_vol}).

Now we will prove (\ref{kalyma}) by induction in $q$. First, we claim that there exist $L^m_i,Q^m_0>0$ and $p_{m,1},\ldots,p_{m,Q^m_0}\in M$ such that for every $\bar\tau\in [\frac{\alpha}{i^2},\frac{1}{i^2}]$
\begin{eqnarray}
 S^{i,m}\subseteq\bigcup_{l=1}^{Q^m_0} B_g(p_{m,l},-\bar\tau,2D\sqrt{\bar\tau}), \label{tricky_cover}
\end{eqnarray}
and $Q^m_0(2D\sqrt{\bar\tau})^s \leq L^m_i$.  To see this, observe that there exists $R_i>0$ (depending also on the the Type I curvature bound $C$) such that for every $p\in M$ and $\bar\tau\in [\frac{\alpha}{i^2},\frac{1}{i^2}]$
\begin{eqnarray}
 B_g(p,-\frac{1}{i^2},R_i)\subseteq B_g(p, -\bar\tau,2 D\sqrt{\bar\tau}),\label{cover}
\end{eqnarray}
Hence, choosing a cover of $S^{i,m}$ by $Q^m_0$ balls $B_g(x_l,-\frac{1}{i^2},R_i)$, $l=1,\ldots, Q^m_0$, we immediately obtain (\ref{tricky_cover}), for $L^m_i=Q^m_0(\frac{2D}{i})^s $.

Now, let $q\geq 1$, $\tau\in [\frac{\alpha^{q+1}}{i^2},\frac{\alpha^q}{i^2}]$, and suppose there exist $p_l\in M$, $l=1,\ldots,Q_q^m$ such that 
\begin{eqnarray}
 S^{i,m}\subseteq \bigcup_{l=1}^{Q_q^m} B(p_l,-\tau, 2D\sqrt{\tau}),
\end{eqnarray}
$B(p_l,-\tau, 2D\sqrt{\tau})\cap S^{i,m}\neq \emptyset$ for every $l$ and $Q_q^m(2D\sqrt \tau)^s \leq 2^{-q}L^m_i$.

Choose any such ball $B_g(p_{l_0}, -\tau, 2D\sqrt{\tau})$ and $x\in S^{i,m}\cap B_g(p_{l_0},-\tau,2D\sqrt{\tau}) $. Then, from the definitions of $S^{i,m}$ and $S^{x,\sqrt \tau, i^{-1}}$ it follows that
\begin{eqnarray}
S^{i,m}\cap B_{g_{\sqrt{\tau}}}(x,-1,4D)\subseteq S^{x,\sqrt{\tau},i^{-1}}.
\end{eqnarray}

Hence, there exist $k\leq j$, $X=N^{n-k}\times \mathbb R^k$,  $z(t)=h(t)+g_{Eucl}$, $V\subset N$ and $F$ as in Lemma \ref{approx} such that
$$F^{-1}(S^{i,m}\cap B_{g_{\sqrt{\tau}}}(p_{l_0},-1,2D))\subseteq F^{-1}(S^{i,m}\cap B_{g_{\sqrt{\tau}}}(x,-1,4D))\subseteq  \mathcal N^{z(-\alpha)}_{\epsilon_s}(V\times \mathbb R^k).$$

Moreover, note that there is a ball $B_{ 2D(1.001)}\subseteq \mathbb R^j$ such that
\begin{eqnarray}
 F^{-1}(S^{i,m}\cap B_{g_{\sqrt{\tau}}}(p_{l_0},-1,2D))\subseteq  \mathcal N^{z(-\alpha)}_{\epsilon_s}(V\times B_{2D(1.001)}).
\end{eqnarray}
Hence, putting $\rho=2D(1.001)$ in the covering Lemma \ref{covering_lemma} we obtain
\begin{eqnarray}
 F^{-1}(S^{i,m}\cap B_{g_{\sqrt{\tau}}}(p_{l_0},-1,2D))\subseteq \bigcup_{a=1}^{P(\sigma)} B_z((q,y_a), -\alpha, \frac{2\sigma D}{1.001}), 
\end{eqnarray}
for $y_a\in \mathbb R^k$ and $P(\sigma)\sigma^s\leq \frac{1}{2}$, since $\alpha=\sigma^2 < (\frac{\sigma \rho}{2D})^2=\sigma^2 1.001^2$.

Thus, there exist $o_l\in M$ such that 
\begin{eqnarray}
 S^{i,m}\cap B_{g_{\sqrt{\tau}}}(p_{l_0},-1,2D)\subseteq \bigcup_{l=1}^{P(\sigma)} B_{g_{\sqrt{\tau}}}(o_l, -\sigma^2, 2\sigma D),
\end{eqnarray} 
$o_l\in M$, which implies that there exist $p'_l \in M$, $l=1,\ldots,Q_{q+1}^m$, $Q_{q+1}^m\leq Q_q^mP(\sigma)$ such that
\begin{eqnarray}
 S^{i,m}\subseteq \bigcup_{l=1}^{Q_{q+1}^m} B(p'_l,- \alpha \tau, 2D\sqrt{\alpha\tau}),
\end{eqnarray}
$B(p'_l,-\alpha\tau, 2D\sqrt{\alpha\tau})\cap S^{i,m}\neq \emptyset$ for every $l$ and $Q_{q+1}^m (2D\sqrt{\alpha\tau})^s \leq 2^{-(q+1)}L_i$. This proves (\ref{kalyma}) for $L_i=\sum_m L_i^m$ and $Q_q =\sum_m Q_q^m$.

In order to prove (\ref{size_est_3}), take any $\epsilon,\alpha>0$, $x \in \Sigma_0$ and set $\bar\delta=\delta(x,0,\epsilon,\alpha)$. From Theorem \ref{splitting} and Lemma \ref{approx},  for every $\tau\in(0,\bar\delta^2]$ there is a non-flat shrinking Ricci soliton $(X,z(t), m)_{t\in (-\infty,0)}$ such that $\diam_{z(t)}S(X,z) \leq D\sqrt{-t}$, and a diffeomorphism $F:B_z(m,-1,5D)\rightarrow M$ with $F(m)=x$ satisfying
\begin{eqnarray}
F^{-1}(S^{x,\sqrt \tau, i^{-1}}) \subset \mathcal N_{\epsilon}^{z(t)}( S(X,z) ),\label{ena}\\
(1.001)^{-2}z(t) \leq F^* g_{\sqrt \tau}(t) \leq 1.001^2 z(t),\label{dyo}
\end{eqnarray}
for every $t\in [-1,-\alpha]$.

Now, take any $\tau\in (0,\bar\delta^2]$ and let $y'\in B_g(x,-\tau,4D \sqrt \tau)\cap \{ y\in M, \; \Theta_g(y)\leq \Theta_g(x)\}$. 
Since $ \{ y\in  B_g(x,-\tau,4D \sqrt \tau), \; \Theta_g(y)\leq \Theta_g(x)\} \subset S^{x,\sqrt \tau,\bar\delta}$ it follows from (\ref{ena}) that $F^{-1}(y')\in B_z(m,-\lambda, \sqrt{\lambda}D+\epsilon)$, for every $\lambda\in [\alpha,1]$.

  Then, by (\ref{dyo}), $y'\in B_g (x,  -\lambda \tau, 1.001(\sqrt{\lambda}D+\epsilon) \sqrt{\tau})$. Moreover, there is a $\bar\lambda \in [\alpha , 1)$ (independent of $\tau$) such that for every $\lambda \in[\bar \lambda, 1]$, 
 \begin{eqnarray}
 B_g (x,  -  \lambda\tau, 1.001(\sqrt{\lambda}D+\epsilon) \sqrt{\tau})\subset B_g (x,  -\lambda \tau, 4D \sqrt{\lambda\tau}).
 \end{eqnarray}
  
 We conclude that for every $\tau\in (0,\bar\delta^2]$ and $\lambda\in [\bar\lambda,1]$
 \begin{eqnarray}
  B_g(x,-\tau,4D \sqrt \tau)\cap \{ y\in M, \; \Theta_g(y)\leq \Theta_g(x)\} \subset  B_g (x,  - \lambda\tau, 4D \sqrt{\lambda\tau}),
 \end{eqnarray}
 which suffices to prove (\ref{size_est_3}) for $\bar\tau=\bar\delta^2$ and $R_0=4D$.
\end{proof}

Below, we finish by proving Corollary \ref{corol}.

\begin{proof}[Proof Corollary \ref{corol}]
First of all, notice that $\Sigma$ is closed, hence $\overline{\Sigma_j}=\Sigma$. Now, estimate (\ref{cor_est}) follows from Theorem \ref{main_theorem} by setting $s=j+2\varepsilon$.

In general $\Sigma=\Sigma_{n-2}$, since every shrinking Ricci soliton which splits more than $n-2$ Euclidean factor is flat. Therefore, the volume estimate of Case 1 follows by setting $j=n-2$.

In Case 2, every tangent flow should have vanishing Weyl curvature. Thus, it should be isometric either to flat $\mathbb R^n$ (the Gaussian soliton), or to quotiens of $S^{n-1}\times \mathbb R$ or $S^n$, by \cite{Weylflat}. Hence $\Sigma=\Sigma_1$ and the volume estimate follows by setting $s=1+2\varepsilon$.

\end{proof}

\providecommand{\bysame}{\leavevmode\hbox to3em{\hrulefill}\thinspace}
\providecommand{\MR}{\relax\ifhmode\unskip\space\fi MR }
% \MRhref is called by the amsart/book/proc definition of \MR.
\providecommand{\MRhref}[2]{%
  \href{http://www.ams.org/mathscinet-getitem?mr=#1}{#2}
}
\providecommand{\href}[2]{#2}

\end{document}